\documentclass{amsart}
\usepackage{amssymb, graphics, color, enumitem}
\usepackage[all]{xy}

 \usepackage{tikz}

 \usepackage[applemac]{inputenc}
\usepackage[cyr]{aeguill}

\usepackage[margin = 1in]{geometry}

\newtheorem{lemma}{Lemma}[section]

\newtheorem{theorem}[lemma]{Theorem}

\newtheorem{example}[lemma]{Example}

\newtheorem{remark}[lemma]{Remark}

\newtheorem*{Acknowledgement}{Acknowledgements}

\newtheorem{assumption}[lemma]{Assumption}

\newcommand\cf{cf\@. }

\newcommand\eg{e\@.g\@. }

\newcommand\pa{ \partial}

\newcommand\bbC{\mathbb C}

\newcommand\bbN{\mathbb N}

\newcommand\bbQ{\mathbb Q}
\newcommand\bbR{\mathbb R}
\newcommand\bbS{\mathbb S}

\newcommand\bbZ{\mathbb Z}

\usepackage{color}

\newcommand{\lrp}[1]{\left( {#1} \right)}

\newcommand\hH{\widehat{H}}

\newcommand\CI{\mathcal{C}^{\infty}}

\newcommand\cC{\mathcal{C}}

\newcommand\cA{\mathcal{A}}

\newcommand\db{\overline{\pa}}

\newcommand\phg{\operatorname{phg}}

\newcommand\hphi{\hat{\phi}}

\newcommand\AC{\operatorname{AC}}

\newcommand\SU{\operatorname{SU}}

\newcommand\QAC{\operatorname{QAC}}

\newcommand\nQb{\mathfrak{n}\operatorname{Qb}}

\newcommand\QALE{\operatorname{QALE}}

\newcommand\sing{\operatorname{sing}}

\newcommand\cO{\mathcal{O}}

\newcommand\Hilb{\operatorname{Hilb}}

\begin{document}
\title[New examples of affine Calabi-Yau $3$-folds]{New examples of affine Calabi-Yau $3$-folds with maximal volume growth}

\author{Shih-Kai Chiu}
\address{Department of Mathematics, University of California, Irvine}
\email{shihkaic@uci.edu}
\author{Ronan J.~Conlon}
\address{Department of Mathematical Sciences, The University of Texas at Dallas}
\email{ronan.conlon@utdallas.edu}

\author{Fr\'ed\'eric Rochon}
\address{D\'epartement de Math\'ematiques, Universit\'e du Qu\'ebec \`a Montr\'eal}
\email{rochon.frederic@uqam.ca}

\maketitle

\begin{abstract}
We construct examples of complete Calabi-Yau metrics on smoothings of $3$-dimensional Calabi-Yau cones that are not products of lower-dimensional Calabi-Yau cones and that have orbifold singularities away from the vertex.
\end{abstract}

\tableofcontents

\section{Introduction}

A complete Calabi-Yau manifold of complex dimension $m$ is of maximal volume growth when the volume of a ball of radius $r$ is comparable to $r^{2m}$ as $r$ tends to infinity.  For such a manifold, a tangent cone at infinity is necessarily of real dimension $2m$.  Moreover, by \cite{Colding-Minicozzi}, the tangent cone at infinity is unique when it has a smooth link, which is the case if and only if the curvature of the metric decays quadratically. By \cite{Sun-Zhang}, this implies that the metric is asymptotically conical ($\AC$ for short), meaning that it converges smoothly at infinity at a polynomial rate $\mathcal{O}(r^{-\epsilon})$ for some $\epsilon>0$ to a Calabi-Yau cone with a smooth link. By imposing symmetries, one can sometimes reduce the construction of a Calabi-Yau $\AC$-metric to finding a solution to an ordinary differential equation (ODE); see for instance \cite{Calabi, Stenzel}. HyperK\"ahler quotients also yield many examples, notably through the classification of asymptotically locally Euclidean (ALE) gravitational instantons by Kronheimer \cite{Kronheimer1989}.  Another approach consists in finding an approximate Calabi-Yau $\AC$-metric and solving a complex Monge-Amp\`ere equation to obtain an actual Calabi-Yau $\AC$-metric \cite{Joyce, vanC, vanC2011, Goto, CH2013,CH2015}.  Recently, a classification of Calabi-Yau $\AC$-metrics was obtained in \cite{CH3}: a Calabi-Yau $\AC$ manifold corresponds to a K\"ahler crepant resolution of a deformation of its tangent cone at infinity.

In general, however, a complete Calabi-Yau manifold of maximal volume growth can have a tangent cone at infinity with a singular link.  Similar to the $\AC$ case, various methods can be used to construct such metrics.  In \cite{BG1997}, Biquard and Gauduchon obtained explicit examples on the cotangent bundles of Hermitian symmetric spaces by imposing symmetries, reducing the construction to solving an ODE.  The hyperK\"ahler quotient construction also yields many examples, notably the Nakajima metric on the reduced Hilbert scheme $\Hilb_0^n(\bbC^2)$ of $n$ points in $\bbC^2$ by \cite{Nakajima,Carron2011}, and more generally generic quiver varieties and hyperK\"ahler toric varieties of finite topology by \cite{DR} and \cite{Rochon2025}.  The asymptotic geometry at infinity is very well understood in these cases.  For the Hilbert scheme $\Hilb_0^n(\bbC^2)$, the Nakajima metric is quasi-asymptotically locally Euclidean (QALE), while for quiver varieties and hyperK\"ahler toric varieties, the metrics are in the larger class of quasi-asymptotically conical (QAC) metrics.  

The class of $\QALE$-metrics was originally introduced by Joyce \cite{Joyce} in his construction of Calabi-Yau metrics on K\"ahler crepant resolutions of $\bbC^m/\Gamma$, where $\Gamma\subset \SU(m)$ is a finite subgroup that does not necessarily act freely on $\bbC^m\setminus\{0\}$.  The approach of Joyce consisted in gluing model metrics at infinity to produce a metric that is asymptotically Calabi-Yau;  the actual Calabi-Yau metric is then obtained by solving a complex Monge-Amp\`ere equation.  A key analytical step was to obtain good mapping properties for the Laplace-Beltrami operator of a $\QALE$-metric.  Motivated by the $\QALE$ case, Degeratu and Mazzeo \cite{DM2018} introduced the larger class of $\QAC$-metrics and derived the corresponding mapping properties of the Laplace-Beltrami operator. This made possible the construction in \cite{CDR2019} of Calabi-Yau $\QAC$-metrics on K\"ahler crepant resolutions of certain Calabi-Yau cones.  

Instead of crepant resolutions, one can consider smoothings of Calabi-Yau cones.  This idea was pursued independently in \cite{YangLi, CR2021,Szekelyhidi} to produce exotic examples of complete Calabi-Yau metrics of maximal volume growth on $\bbC^m$ for $m\ge 3$. In these examples, $\bbC^m$ is isomorphic to a suitable smoothing of the Calabi-Yau cone $C\times \bbC$, where $C \subset \bbC^{m}$ is a Calabi-Yau cone with a smooth link.  As pointed out in \cite{CR2021}, these metrics are not $\QAC$, but rather warped $\QAC$, since the asymptotic models near the singularities of the tangent cone are warped products rather than Cartesian products of $\AC$-metrics.  The construction in \cite{Szekelyhidi} was subsequently generalized in \cite{Firester} to allow the cone $C$ to be given by a complete intersection.  In all of these examples, the link of the tangent cone is singular, with singularities of depth at most 1.  By enlarging the definition of warped $\QAC$-metrics, examples with higher-depth singularities were produced in \cite{CR2024} by replacing $C$ by a product $C_1\times\cdots\times C_{\ell}$ of complete intersection Calabi-Yau cones with smooth links.  In a different direction, let us mention that Biquard and Delcroix \cite{Biquard-Delcroix}, and subsequently Nghiem \cite{Nghiem},  have generalized the construction of \cite{BG1997} by solving a complex Monge-Amp\`ere equation using the wonderful compactification of the underlying symmetric space.

In the examples of \cite{YangLi, CR2021,Szekelyhidi, Chiu, Firester, CR2024}, see also \cite{Yan}, the tangent cone at infinity is always a product of cones, one of whose factors is $\bbC$.  The goal of the present paper is to adapt the approach of \cite{CR2024} to situations in which the tangent cone at infinity has a singular link, but is not a product of lower-dimensional Calabi-Yau cones.  To describe our main result, Theorem~\ref{CY.14} below, let $P \in \bbC[z_1,z_2,z_3,z_4]$ be a homogeneous polynomial of weighted degree $d$ such that $P(z) = 0$ defines a Calabi-Yau cone $C_0 \subset \bbC^4$. Let us further assume that the singularities of $C_0$ are complex lines given by some of the coordinate axes and that, away from the apex, they correspond to quotient singularities with the Calabi-Yau cone metric smooth in the sense of orbifolds.  For such a cone, we consider smoothings of the form
$$
     P(z)=Q(z), \quad z\in \bbC^{4},
$$  
for a polynomial $Q$ of sufficiently high weighted degree $\ell$.  More precisely, if $d_s$ is the weighted degree of the polynomial defining the transverse Calabi-Yau orbifold, then we require $\ell\in [d-d_s, d)$. The point of this assumption is to write down a K\"ahler warped $\QAC$-metric with bounded holomorphic bisectional curvature, which is required for solving the complex Monge-Amp\`ere equation; see Remark~\ref{CY.16} below for more details. We then follow closely the strategy in \cite{CR2024}. The resulting Calabi-Yau metrics have curvature decaying at a rate $r^{-2\nu}$ for some specific $\nu\in[0,1)$ depending on the smoothing; see the statement of Theorem~\ref{CY.14} for the precise definition of $\nu$.  As explained in Remark~\ref{CY.4b}, our method only works in complex dimension $3$.   However, it can also work when the tangent cone at infinity is a complete intersection; see Theorem~\ref{ci.10} below for a specific example.

One way to generate new examples of Calabi-Yau metrics from this result is to consider, for  $k,l \in \bbN$, the variety
\begin{align*}
  C_{k,l} = \{ u^kv^l = wz\} \subset \bbC^4.
\end{align*}
These are $\bbQ$-Gorenstein toric varieties and admit Calabi-Yau cone metrics by \cite{CveticLuPagePope2005} and, independently, \cite{MartelliSparks2005}; see also \cite{Berman2023} for the general existence result.  Away from the apex of the cone, the metric is smooth in the sense of orbifolds thanks to Lemma~\ref{ap.1} below. Note that $C_{1,1}$ is the conifold, and if at least one of $k$ and $l$ is at least $2$, then $C_{k,l}$ has a one-dimensional singular set. Our main result applies to this class of Calabi-Yau cones and yields the following.

\begin{theorem}
  For both $k \ge 2$ with $l=1$, and $k=l \ge 2$, there exist smoothings of $C_{k,l}$ that admit complete Calabi-Yau metrics with maximal volume growth. These metrics are warped $\QAC$ in the sense of \cite{CR2024} and have $C_{k,l}$ as tangent cone at infinity.
\end{theorem}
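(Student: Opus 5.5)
This is an application of the main result, Theorem~\ref{CY.14}, whose hypotheses are listed in the introduction. So the plan is to check those hypotheses for $C_{k,l}$ in the two cases, and then write down an admissible smoothing $P=Q$.

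\textbf{Set-up.} Put $P(u,v,w,z)=u^kv^l-wz$. The Calabi-Yau cone metric on $C_{k,l}$ comes from \cite{CveticLuPagePope2005,MartelliSparks2005} (see also \cite{Berman2023}). Its Reeb field fixes the weights $(a,b,c,e)$ of $(u,v,w,z)$, and $P$ is weighted homogeneous of degree $d=ka+lb=c+e$. The Calabi-Yau condition gives $d<a+b+c+e$.

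\textbf{Singular locus.} The singularities of $C_{k,l}$ lie on coordinate axes. From $\partial P=0$ and $P=0$ we get $w=z=0$ and $u^{k-1}v^l=u^kv^{l-1}=0$.
\begin{itemize}[leftmargin=*]
\item If $k\ge2$ and $l=1$, the singular set is the $v$-axis. Along it the transverse singularity is $\{u^k=wz\}$, an $A_{k-1}$ quotient $\bbC^2/\bbZ_k$. Here $d_s$ is its weighted degree, namely $ka=c+e$.
\item If $k=l\ge2$, the singular set is the union of the $u$-axis and the $v$-axis. The transverse singularities are $\{v^k=wz\}$ and $\{u^k=wz\}$, again $A_{k-1}$, with $d_s=kb$ and $d_s=ka$ respectively.
\end{itemize}
In both cases Lemma~\ref{ap.1} shows that away from the apex the cone metric is smooth in the orbifold sense. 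So every structural hypothesis of Theorem~\ref{CY.14} holds.

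\textbf{Choice of smoothing.} We need a polynomial $Q$ of weighted degree $\ell\in[d-d_s,d)$ such that $\{P=Q\}$ is smooth. In the case $l=1$, $d-d_s=b$, so the natural choice is $Q=v$ with $\ell=b<d$. For $k=l$, the map $(u,v,w,z)\mapsto(v,u,z,w)$ suggests working with symmetric weights $a=b$ and $c=e$, so that $d_s=ka$ and $d-d_s=ka$. Then $\ell$ must lie in $[ka,2ka)$. Candidates are $Q=u^k+v^k$, or $Q=u^{i}v^{j}$-type combinations with $i+j\ge k$. Alternatively, if the weights are not symmetric, one can take a sum of monomials such that each transverse singularity is smoothed.

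Smoothness of $\{P=Q\}$ is then a Jacobian computation. For instance with $l=1$ and $Q=v$ we get $wz=u^kv-v=v(u^k-1)$. Its gradient vanishes only if $w=z=0$, $u^k=1$ and $kvu^{k-1}=0$, so $v=0$. That point satisfies the equation, so this $Q$ is singular and must be corrected. One fix is $Q=v+1$, giving the equation $v(u^k-1)=1+wz$. The constant term $1$ has degree $0<b$. This is allowed if the interval condition is read as requiring $\ell\ge d-d_s$ only for the top-degree part of $Q$, with the lower-order terms acting as a generic perturbation. Otherwise I would use $Q=v+\epsilon u^m$ or similar. For general $\epsilon$ (and a lower-order constant if permitted), Bertini ensures smoothness.

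\textbf{Main obstacle.} I expect the hardest step to be choosing $Q$ so that three things hold at once. First, $\{P=Q\}$ is smooth. Second, the degree constraint $\ell\in[d-d_s,d)$ holds for every singular axis. Third, $Q$ smooths every transverse $A_{k-1}$ singularity in the way Theorem~\ref{CY.14} requires, which is what determines $\nu$. Meeting all three needs explicit knowledge of the Reeb weights. These are irrational in general, so the inequalities must be checked using only the symmetry and the Calabi-Yau inequality. That is presumably why only the cases $l=1$ and $k=l$ are claimed.

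Once this is done, Theorem~\ref{CY.14} gives a complete Calabi-Yau warped $\QAC$ metric with tangent cone $C_{k,l}$ and maximal volume growth.
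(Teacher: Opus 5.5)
Your overall route is the paper's: apply Theorem~\ref{CY.14}, get Assumption~\ref{CY.3} from Lemma~\ref{ap.1}, and pick an explicit $Q$. Your $Q=v+1$ is in fact the paper's $Q=z_1^m+1$ with $m=1$. Your hedging about the constant is unnecessary: $\ell$ is the top weighted degree of $Q$, lower-order terms are allowed (cf.\ \eqref{CY.5}), so your ``top-degree'' reading is the right one. There are, however, genuine gaps.

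\textbf{First gap: you misidentify $d_s$.} In Assumption~\ref{CY.13}, $d_s$ is the weighted degree of $P_{i,\omega_{i,i}}$ for the $\bbR^+$-action of the \emph{transverse} Calabi--Yau cone $C_{i,\omega_{i,i}}\cong\bbC^2/\bbZ_k$ with its flat orbifold metric. It is not computed from the ambient Reeb weights. For $\{x^k=yz\}$ (via $x=pq$, $y=p^k$, $z=q^k$) the transverse weights are $(2,k,k)$, so $d_s=2k$ on every singular axis. This is also why the axis-independence required in Assumption~\ref{CY.13} is automatic, with no symmetry of the Reeb weights needed. Your $d_s=ka$ (or $kb$) gives wrong numbers:
\begin{itemize}
\item For $k=l$ the weights are $(\tfrac32,\tfrac32,\tfrac{3k}2,\tfrac{3k}2)$ and $d=3k$. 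You get the window $[\tfrac{3k}2,3k)$ instead of $[k,3k)$.
\item For $l=1$ you would conclude $\nu=0$ for $Q=v+1$. In fact $\nu=1-\tfrac a2>0$.
\end{itemize}

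\textbf{Second gap: you omit a hypothesis of Theorem~\ref{CY.14}.} Besides $d-d_s\le\ell<d$, one needs, with $\beta=\min\{d_s,4\}=4$ and $\nu=\frac{d_s-(d-\ell)}{d_s}$, that either $\beta>\frac{2}{1-\nu}$ or $3<\beta\le\frac{2}{1-\nu}<9$. Here this amounts to $\nu<\frac79$, i.e.\ $\ell<d-\frac{4k}{9}$. This condition is what actually cuts down the admissible smoothings: to $k\le\ell<\frac{23k}{9}$ when $k=l$, and to $m\le 3,4,6$ for $k=2,3,4$ when $l=1$. It is also where the explicit Reeb weights are needed, contrary to your expectation that symmetry and the inequality $d<\sum_j w_j$ suffice. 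The paper takes the weights from Martelli--Sparks--Yau volume minimization. For $k=l$, symmetry (by uniqueness of the Reeb field) plus the correct normalization $\sum_j w_j-d=3$ yields the weights above.
\begin{itemize}
\item For $Q=v+1$ the two conditions read $\frac49<a\le 2$. These hold because $a=\frac{k-2+\sqrt{k^2-k+1}}{k-1}\in(1,2)$.
\item For $Q=u^k+v^k+c$ the condition holds since $\frac{3k}2<\frac{23k}9$.
\item Your ``$u^iv^j$-type'' candidates with $i+j$ close to $2k$ violate it.
\end{itemize}

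\textbf{Third gap: you never verify Assumption~\ref{CY.12}.} It requires the top-degree part $Q_\ell$ to be nonzero on each singular axis; otherwise \eqref{CY.10} is just the singular transverse cone. It holds for $v+1$ and for $u^k+v^k+c$. It fails for monomials $u^iv^j$ with $i,j\ge1$. It also fails for your fallback $v+\epsilon u^m$: since $a>b$, its top-degree part is $\epsilon u^m$, which vanishes on the $v$-axis.

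Finally, $u^k+v^k$ without a generic constant $c$ defines a hypersurface singular at the origin, which is why the paper adds $c$.
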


 Note that $C_{2,1}$ is the suspended pinch point (SPP) singularity. The SPP singularity is perhaps the simplest non-product three-dimensional $\bbQ$-Gorenstein singularity with non-isolated singular set, and its singularities away from the origin are locally isomorphic to $\bbC \times \bbC^2/\bbZ_2$. Notably, the cones $C_{k,1}$ with $k\ge 2$ have irregular Reeb vector fields. 
 \begin{remark}
In complex dimension $m \ge 4$, examples of maximal volume growth Calabi-Yau metrics with irregular singular tangent cones at infinity have also been recently constructed by Nghiem \cite{Nghiem}.  
\end{remark}

The paper is organized as follows.  The main technical result, Theorem~\ref{CY.14}, is proved in \S~\ref{mr.0}, while the main examples are presented in \S~\ref{ex.0}.  In \S~\ref{ci.0}, we provide a specific example where the tangent cone at infinity is a complete intersection.

\begin{Acknowledgement}  
We are grateful to Vestislav Apostolov for pointing out to us Lemma~\ref{ap.1} and its proof, Tran-Trung Nghiem for useful discussions, and Matej Filip for bringing our attention to the cones $C_{k,l}$. The second author was supported by a Simons Travel Grant, and the third author was supported by a NSERC discovery grant and a FRQNT team research project grant. Part of this work was carried out when all three authors were attending the workshop \emph{Special Geometric Structures and Analysis} that took place at SLMath (formerly MSRI) in September 2024. 

\end{Acknowledgement}

\section{Main result} \label{mr.0}

Let $(C_0,g_{C_0})$ be a Calabi-Yau cone with $C_0$ corresponding to a codimension 1 hypersurface in $\bbC^{4}$ given by 
\begin{equation}
 C_0= \{ z\in \bbC^{4}\; | \; P(z)=0\}
\label{CY.1}\end{equation}
for some polynomial $P$.  Suppose that  the natural $\bbR^+$-action on $C_0$ is induced by an $\bbR^+$-action on $\bbC^{4}$ of the form
\begin{equation}
      \begin{array}{lccl}
      \bbR^+\ni t: & \bbC^{4} & \to & \bbC^{4} \\
             &z &\mapsto & t\cdot z =(t^{w_1}z_1, t^{w_2}z_2, t^{w_3}z_3,t^{w_{4}}z_{4})
      \end{array}
\label{CY.2}\end{equation}
for some multiweight $w=(w_1, w_2,w_3,w_{4})\in (\bbR^+)^{4}$.  We will not assume that the Calabi-Yau cone $(C_0,g_{C_0})$ is quasi-regular, so the weights $w_i$ are not necessarily rational.  Suppose also that the polynomial $P$ is homogeneous of (weighted) degree $d$ with respect to this $\bbR^+$-action, namely that
$$
   P(t\cdot z)= t^d P(z) \quad \forall \; t\in\bbR^+, \; z\in \bbC^{4}.
$$ 
On $C_0$, the K\"ahler form of the metric $g_{C_0}$ is $\omega_{C_0}= \frac{\sqrt{-1}}2\pa \db r^{2}_{C_0}$, where $r_{C_0}$ is the radial distance to the origin with respect to the Calabi-Yau metric $g_{C_0}$.  We will suppose that the singular locus of the Calabi-Yau cone $C_0$ takes the following form.
\begin{assumption}
The singular locus $C_{0,\sing}$ of $C_0$ is of the form
\begin{equation}
  C_{0,\sing}= \bigcup_{i\le s} L_i
\label{CY.3a}\end{equation}
for some $0<s\le 4$, where 
\begin{equation}
  L_i:= \{ z=(z_1,z_2,z_3,z_{4})\in \bbC^{4}\; | \; z_j=0 \; \mbox{for} \; j\ne i\}
\label{CY.3b}\end{equation}
is the $z_i$-axis.  
Moreover, we will suppose that $C_{0}\setminus \{0\}$ is a complex orbifold and that $g_{C_0}$ is smooth in the sense of orbifolds on $C_0\setminus \{0\}$.
\label{CY.3}\end{assumption}   
\begin{remark}
In principle, our method would work in complex dimension $m>3$ by requiring that the orbifolds singularities be of codimension $m-1$ and locally modelled on $\bbC\times (\bbC^{m-1}/\Gamma)$ for some finite subgroup $\Gamma_i\subset \SU(m-1)$ and with $\bbC^{m-1}/\Gamma$ admitting a description as an affine hypersurface.  However, by \cite{Gordeev1982,Kac-Watanabe1982}, the singular set of such a quotient $\bbC^{m-1}/\Gamma$ is at most of codimension $2$, forcing $m=3$ to have a non-trivial example.
\label{CY.4b}\end{remark}

For a polynomial $Q$ of weighted degree $\ell<d$, consider for $\epsilon\in \bbC$ the affine deformation
\begin{equation}
   C_\epsilon:= \{ z\in \bbC^{4}\; | \; P(z)=\epsilon Q(z) \}
\label{CY.4}\end{equation} 
of $C_0$ and suppose that $C_{\epsilon_0}$ is smooth for some $\epsilon_0\in \bbC\setminus \{0\}$.

Let $\overline{\bbC^{4}_w}$ be the weighted radial compactification of $\bbC^{4}$ with respect to the $\bbR^{+}$-action \eqref{CY.2} as described in \cite[\S~5]{CR2024}.  Let $\overline{C}_{\epsilon}$ denote the closure of $C_{\epsilon}$ in $\overline{\bbC^{4}_w}$.  Similarly, let $\overline{L}_i$ be the closure of $L_i$ in $\overline{\bbC^{4}_w}$.  Since $\ell<d$, notice that the boundary $\pa \overline{C}_{\epsilon}= \overline{C}_{\epsilon}\cap \pa \overline{\bbC^{4}_w}$ for $\epsilon\ne 0$ coincides with $\pa \overline{C}_0$.  In particular, even if $C_{\epsilon_0}$ is smooth, its closure $\overline{C}_{\epsilon_0}$ is not with singular locus corresponding to 
$$
    \pa\overline{C}_{0,\sing}= \bigcup_{i\le s} \pa\overline{L}_i.
$$  

To see this, we can consider the system of coordinates $(\xi_i, \omega_i)$
given by 
\begin{equation}
    \xi_i:= \frac{1}{|z_i|^{\frac{1}{w_i}}}\quad \mbox{and}  \quad \omega_i:=(\omega_{i,1},\omega_{i,2},\omega_{i,3},\omega_{i,4})\quad \mbox{with} \quad \omega_{i,j}:= \frac{z_j}{|z_i|^{\frac{w_j}{w_i}}}.
\label{CY.4a}\end{equation}
It is valid on $\overline{\bbC^{4}_w}$ near $\pa \overline{\bbC^{4}_w}$, but away from the hyperplane $z_i=0$.  As $i$ varies, this gives four systems of coordinates covering $\pa\overline{\bbC^4_w}$.  If 
$$
  Q(z)= \sum_{q} Q_q(z)
$$
is the homogeneous decomposition of $Q$ with $Q_q$ homogeneous of weighted degree $q$, then in the coordinates \eqref{CY.4a}, the equation defining $C_{\epsilon}$ takes the form
\begin{equation}
 P(\omega_i)= \epsilon \left( \xi_i^{d-\ell} Q_{\ell}(\omega_i) +\sum_{q<\ell} \xi_i^{d-q}Q_q(\omega_i) \right).
\label{CY.5}\end{equation}
At $\xi_i=0$, that is, on $\pa \overline{\bbC^{4}_w}$, this gives the equation
\begin{equation}
  P(\omega_i)=0
\label{CY.6}\end{equation}
with singularities along $\pa L_i$ if $i\le s$.

To resolve these singularities at infinity, we will consider suitable weighted blow-ups of $\pa\overline{L}_1,\ldots, \pa\overline{L}_s$.  The multiweights used in these weighted blow-ups will depend on the local description of the singularities.  To describe those,  consider the polynomial
$$
        P_{i,\omega_{i,i}}(\omega_{i,i_1},\omega_{i,i_2}, \omega_{i,i_3}):= P(\omega_{i,1},\omega_{i,2},\omega_{i,3},\omega_{i,4}),
$$ 
where $\{i_1,i_2,i_3\}=\{1,2,3,4\}\setminus\{i\}$.  For $\omega_{i,i}\in\pa\overline{L}_i\cong \bbS^1$ fixed, the equation 
\begin{equation}
  P_{i,\omega_{i,i}}(\omega_{i,i_1}, \omega_{i,i_2}, \omega_{i,i_3})=0
\label{CY.15}\end{equation}
is a singular affine variety $C_{i,\omega_{i,i}}$ describing the singularity of $C_0$ in the normal direction along the real half-line generated by $\omega_{i,i}$ in $L_i$.  By Assumption~\ref{CY.3}, the only singularity of $C_{i,\omega_{i,i}}$ is at the origin and it is an orbifold singularity, so that  
$$
     C_{i,\omega_{i,i}}\cong \bbC^{2}/\Gamma_{i,\omega_{i,i}}
$$   
for a finite group $\Gamma_{i,\omega_{i,i}}\subset \SU(2)$ acting freely outside of the origin.  Moreover, the metric induced by $g_{C_0}$ on $C_{i,\omega_{i,i}}$ corresponds to the Euclidean metric $\bbC^{2}/\Gamma_{i,\omega_{i,i}}$ under this identification.  In particular, as a Calabi-Yau cone, $C_{i,\omega_{i,i}}$ comes endowed with an $\bbR^+$-action.  
 \begin{assumption}
The $\bbR^+$-action on $C_{i,\omega_{i,i}}$ is induced by an $\bbR^+$-action on $\bbC^3$ of the form
$$
   t\cdot(\omega_{i,i_1},\omega_{i,i_2},\omega_{i,i_3})= (t^{v_{i,i_1}}\omega_{i,i_1}, t^{v_{i,i_2}}\omega_{i,i_2}, t^{v_{i,i_3}}\omega_{i,i_3})
$$
for some multiweight $v_i=(v_{i,i_1}, v_{i,i_2},v_{i,i_3})\in(\bbR^+)^3$.  With respect to this action, the polynomial $P_{i,\omega_{i,i}}$ is homogeneous of weighted degree $d_s>0$.  We will suppose that $d_s$ is independent of $\omega_{i,i}\in \pa\overline{L}_i$ and $i\in\{1,\ldots,s\}$.
\label{CY.13}\end{assumption}

For the weighted blow-up of $\pa\overline{L}_i$, this assumption gives us a multiweight  which assigns the weight $v_{i,j}$ to $\omega_{i,j}$ for $j\ne i$, while \eqref{CY.5} suggests that  the boundary defining function $\xi_i$ should be assigned the weight $\frac{d_s}{d-\ell}$.  Alternatively, denoting by $x_{\max}$ a global choice of boundary defining function for the manifold with boundary $\overline{\bbC^4_w}$, consider first the manifold with boundary $\widetilde{\bbC^{4}_w}$ which, as a topological space, is homeomorphic to $\overline{\bbC^{4}_w}$, but with smooth functions on $\widetilde{\bbC^{4}_w}$ corresponding to smooth functions on $\bbC^{4}$ having a smooth expansion in integer powers of $\widetilde{x}_{\max}:= x_{\max}^{\frac{d-\ell}{d_s}}$ (instead of integer powers of $x_{\max}$).  Let $\widetilde{L}_i$ be the closure of $L_i$ in $\widetilde{\bbC^{4}_w}$.  Near the boundary $\pa \widetilde{L}_i$ of $\widetilde{L}_i$, we can then use the coordinates 
$$
  \widetilde{\xi}_{i}= \xi_i^{\frac{d-\ell}{d_s}}, \quad \omega_{i,i}\in \pa\widetilde{L_i}=\pa \overline{L}_i\cong \bbS^1, \quad (\omega_{i,i_1},\omega_{i,i_2},\omega_{i,i_3})\in \bbC^3,
$$
with $\pa\widetilde{L}_i$ corresponding to 
\begin{equation}
\widetilde{\xi}_{i}=\omega_{i,i_1}=\omega_{i,i_2}=\omega_{i,i_3}=0.
\label{CY.7a}\end{equation}
Then \eqref{CY.5} and Assumption~\ref{CY.13} suggest to consider the weighted blow-up of $\widetilde{L}_i$ with respect to the multiweight $\widetilde{v}_{i}$ which assigns the weight $1$ to the boundary defining function $\widetilde{\xi}_{i}=\xi_i^{\frac{d-\ell}{d_s}}$ and the weight $v_{i,j}$ to $\omega_{i,j}$ for $j\ne i$.   This allows us to consider the space
 \begin{equation}
\widehat{\bbC^{4}_{w}}:= [\cdots[\widetilde{\bbC^{4}_w}; \pa \widetilde{L}_1]_{\widetilde{v}_1} \cdots \pa\widetilde{L}_s]_{\widetilde{v}_s}
\label{CY.7}\end{equation} 
obtained from $\widetilde{\bbC^{4}_w}$ by performing the weighted blow-ups of $\pa\widetilde{L}_1, \ldots, \pa\widetilde{L}_{s}$ in the sense of \cite[\S~4]{CR2024} using respectively the multiweights $\widetilde{v}_1,\ldots,\widetilde{v}_s$.  Since these $p$-submanifolds are disjoint, notice that the order in which we blow up is not important.  Let $\hH_i$ be the boundary hypersurface created by the weighted blow-up of $\pa \widetilde{L}_i$ and let $\hH_{s+1}$ be the boundary hypersurface corresponding to the lift of $\pa\widetilde{\bbC^{4}_w}$ to $\widehat{\bbC^{4}_w}$.  Let $\widehat{C}_{\epsilon}$ be the closure of $C_{\epsilon}$ in $\widehat{\bbC^{4}_w}$.

In terms of the coordinates $\widetilde{\xi}_{i}$ and $\omega_i=(\omega_{i,1},\omega_{i,2},\omega_{i,3},\omega_{i,4})$ near $\pa\widetilde{\bbC^{4}_w}$ on $\widetilde{\bbC^{4}_w}$, the weighted blow-up of $\pa\widetilde{L}_i$ corresponds to introducing the coordinates
\begin{equation}
   \widetilde{\xi}_{i}, \; \omega_{i,i}\in \pa \widetilde{L}_i\cong \bbS^1 \; \quad \mbox{and} \quad \zeta_{i,j}:= \frac{\omega_{i,j}}{\widetilde{\xi}_{i}^{v_{i,j}}}= \xi_i^{w_j-v_{i,j}\frac{d-\ell}{d_s}}z_j \; \mbox{for} \; j\ne i.
\label{CY.8}\end{equation}
More specifically, these are good coordinates near the interior of $\hH_i$.  In these coordinates, equation \eqref{CY.5} takes the form
\begin{equation}
    P_{i,\omega_{i,i}}(\zeta_{i,i_1}, \zeta_{i,i_2}, \zeta_{i,i_3})= \epsilon\left( Q_{\ell}(\omega_i)+ \sum_{q<\ell} \xi_i^{\ell-q}Q_{q}(\omega_i) \right),
\label{CY.9}\end{equation}
so that on the interior of $\hH_i$, it takes the form
\begin{equation}
P_{i,\omega_{i,i}}(\zeta_{i,i_1},\zeta_{i,i_2}, \zeta_{i,i_3})= \epsilon Q_{\ell}(\omega_{i,1},\omega_{i,2},\omega_{i,3},\omega_{i,4})
\label{CY.10}\end{equation}
with $\omega_{i,j}=0$ for $j\ne i$ by \eqref{CY.7a}.
Now, on $\hH_i$, the blow-down map induces a fiber bundle
\begin{equation}
  \hphi_i: \hH_i\to \pa \widetilde{L}_i
\label{CY.11}\end{equation}
and for $\omega_{i,i}\in \pa\widetilde{L}_i$ fixed, $\hphi_i^{-1}(\omega_{i,i})\cap \widehat{C}_{\epsilon}$ is precisely given by \eqref{CY.10}.

\begin{assumption}
For each $i\in\{1,\ldots,s\}$ and each $\omega_{i,i}\in \pa \overline{L}_i$, the equation \eqref{CY.10} yields a smooth affine variety when $\epsilon=\epsilon_0$.  
\label{CY.12}\end{assumption}
\begin{remark}
It is because of this assumption that the singular locus in \eqref{CY.3b} is required to be of complex dimension 1.
\label{CY.12a}\end{remark}

By Assumption~\ref{CY.12}, the closure $\widehat{C}_{\epsilon_0}$ of $C_{\epsilon_0}$ in $\widehat{\bbC^{4}_w}$ is a manifold with corners.  However, as in \cite[\S~5]{CR2024}, it is not necessarily of class $\CI$, but of class $\cC^{k}$ for some nonnegative integer $k\le d_s$.  Nevertheless, by restriction from $\widehat{\bbC^{4}_w}$, there is on $C_{\epsilon}$ a natural ring of `smooth' functions, a ring $\cA_{\phg}(\widehat{C}_{\epsilon_0})\cap L^{\infty}(C_{\epsilon})$ of bounded polyhomogeneous functions, and a ring of $\nQb$-smooth functions $\CI_{\nQb}(C_{\epsilon})$.

We can now state and prove our main result.

\begin{theorem}
Suppose that Assumptions~\ref{CY.3}, \ref{CY.13}, and \ref{CY.12} hold and suppose that 
\begin{equation}
 d-d_s\le \ell <d.
\label{CY.14b}\end{equation}  
If 
$\nu:= \frac{d_s-(d-\ell)}{d_s}$ and $\beta:= \min\{d_s, 4\}$ are such that either $\beta>\frac{2}{1-\nu}$, or else that 
$$
  3<\beta\le \frac{2}{1-\nu}<9,
$$
then the smoothing $C_{\epsilon_0}$ admits a Calabi-Yau warped $\QAC$-metric asymptotic to $g_{C_0}$ with rate $\beta$.  Moreover, near $\hH_i$, this warped $\QAC$-metric is asymptotically modelled on 
\begin{equation}
   d\rho^2+ \rho^2 \hphi_i^*g_{\pa \widetilde{L}_i} + \rho^{2\nu} g_{C_{i,\omega_{i,i},\epsilon_0}},
\label{CY.14a}\end{equation}
where $d\rho^2 + \rho^2 g_{\pa \widetilde{L}_i}$ is a Calabi-Yau cone metric on $L_i$ and $g_{C_{i,\omega_{i,i},\epsilon_0}}$ is a family (as $\omega_{i,i}\in \pa\widetilde{L}_i$ varies) of asymptotically conical Calabi-Yau metrics on the fibers of $\hphi_i|_{\widehat{C}_{\epsilon_0}\cap \hH_i}$ that are seen as $2$-tensors on $\hH_i$ with respect to some choice of connection\footnote{Changing the connection only adds a term of lower order with respect to the model \eqref{CY.14a}.} for the fiber bundle $\hphi_i$.  
\label{CY.14}\end{theorem}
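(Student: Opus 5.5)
We follow the strategy of \cite{CR2024} and split the argument into three parts: build an approximate Calabi-Yau metric on $C_{\epsilon_0}$, control its geometry, and then solve a complex Monge-Amp\`ere equation using the warped $\QAC$ mapping theory on $\widehat{C}_{\epsilon_0}$.

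\textbf{Step 1: local models.} Near each $\hH_i$ we build a model K\"ahler metric. By Assumption~\ref{CY.12}, for each $\omega_{i,i}\in\pa\widetilde{L}_i$ the fiber \eqref{CY.10} is a smoothing of $\bbC^2/\Gamma_{i,\omega_{i,i}}$. Hence it carries an $\AC$ (ALE) hyperK\"ahler metric $g_{C_{i,\omega_{i,i},\epsilon_0}}$ asymptotic to the flat orbifold metric. Since the smoothing \eqref{CY.10} is the Milnor fiber $P_{i,\omega_{i,i}}=\epsilon_0Q_\ell$, we would take Kronheimer's metric in the corresponding class. The Kähler potential is $\phi_{i,\omega_{i,i}}$ with $\phi=r^2+O(r^{2-\beta'})$, where $\beta'$ is governed by $d_s$ and the ALE decay rate $4$; this is where $\beta=\min\{d_s,4\}$ enters.

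These potentials are chosen to depend smoothly on $\omega_{i,i}$, using the $\bbS^1$-equivariance coming from the Reeb flow. We then set
\[
\omega_i=\tfrac{\sqrt{-1}}2\pa\db\bigl(\rho^2+\rho^{2\nu}\phi_{i,\omega_{i,i}}(\zeta_i)\bigr)
\]
in the coordinates \eqref{CY.8}. The exponent $\nu=\frac{d_s-(d-\ell)}{d_s}$ is exactly what makes $\rho^{2\nu}$ times the fiber cone metric agree with $g_{C_0}$ in the normal directions, because $\zeta_{i,j}$ scales like $\xi_i^{-v_{i,j}(d-\ell)/d_s}$ relative to $\omega_{i,j}$. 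We then glue $\omega_i$ to the pullback of $\omega_{C_0}$ away from $\bigcup_i\hH_i$, via the projection $C_{\epsilon_0}\to C_0$ defined as in \cite[\S~5]{CR2024}. This uses cutoffs in $\nQb$-smooth functions and produces a K\"ahler form $\omega$ that is positive at infinity, after adding a compactly supported correction on the interior.

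\textbf{Step 2: geometry of the approximate metric.} We check three properties of $\omega$.
\begin{itemize}
\item[(a)] $\omega$ is a warped $\QAC$-metric with model \eqref{CY.14a}.
\item[(b)] The Ricci potential $f=\log(\omega^3/(c\,\Omega\wedge\overline{\Omega}))$ decays at rate $\beta$ with $\nQb$-derivatives. Here $\Omega$ is the Poincar\'e residue of $dz_1\wedge\cdots\wedge dz_4/(P-\epsilon_0Q)$.
\item[(c)] $\omega$ has bounded geometry and bounded holomorphic bisectional curvature.
\end{itemize}
Condition \eqref{CY.14b}, namely $\ell\ge d-d_s$ or equivalently $\nu\ge0$, is exactly what keeps the warping factor $\rho^{2\nu}$ nondecreasing. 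This prevents the curvature of the fibers from blowing up, so the bisectional curvature stays bounded, as in Remark~\ref{CY.16}. For the error in (b), we compare $\Omega$ with the model holomorphic volume forms on $\hH_i$ and $\hH_{s+1}$ using \eqref{CY.9}. The corrections there carry factors $\xi_i^{\ell-q}$ and $\xi_i^{d-\ell}$, and the error from the fiber potentials decays at rate $\beta$.

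\textbf{Step 3: solving Monge-Amp\`ere.} We solve $(\omega+\sqrt{-1}\pa\db u)^3=e^{f}\omega^3$. First we invoke the mapping properties of the Laplacian for warped $\QAC$-metrics from \cite{CR2024} on weighted H\"older spaces $x^\delta\cC^{k,\alpha}_{\nQb}$. This gives a solution $v$ of $\Delta v=f$ that decays, which improves the Ricci potential so that it decays faster than quadratically. We then apply the Tian--Yau / Hein type existence theorem, which needs bounded bisectional curvature and the Sobolev inequality from maximal volume growth. Finally we bootstrap the decay of $u$ to rate $\beta-2$.

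\textbf{Main obstacle.} The main difficulty is the weight bookkeeping in this last step. The Laplacian is invertible only for weights avoiding indicial roots at $\hH_i$, and these roots are determined by the ALE fibers and the warping $\rho^{2\nu}$. The hypothesis that either $\beta>\frac2{1-\nu}$ or $3<\beta\le\frac2{1-\nu}<9$ should be exactly what makes the iteration close. In the first case a single inversion suffices. In the second case one needs finitely many iterations that improve $f$ by steps of size $2(1-\nu)$, each landing in an admissible weight range, and the bound $<9$ controls how many steps are needed and keeps the weights away from the forbidden ranges. I would first try to reproduce the corresponding lemma of \cite{CR2024} with the codimension of $L_i$ replaced by $2$ complex dimensions and with the decay rate $\beta$ of the fiber data.
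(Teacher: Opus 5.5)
Your proposal is correct in outline but takes a genuinely different route in Step~1. The overall architecture is the paper's: an approximate warped $\QAC$ K\"ahler metric, a Ricci potential estimate from the local form \eqref{CY.9} of the equation, then the Monge--Amp\`ere machinery of \cite{CR2024}. Like the paper, you ultimately defer the weight bookkeeping to \cite[\S~6, Corollary~6.6]{CR2024}.

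The paper does no gluing. Since $r^2$ is already a homogeneous function on all of $\bbC^4$, near $\pa\widetilde{L}_i$ it has the expansion $\widetilde{\xi}_i^{-2d_s/(d-\ell)}(h_0+h_2+h_{\ge 3})$. Here $h_0$ is constant by Reeb invariance and there is no degree-one term by \cite[Lemma~3.1]{CDR2019}, so $r^2\approx h_0\rho^2+\rho^{2\nu}h_2$ in the rescaled fiber variables. The paper then only modifies $h_2$ on compact sets via \cite[Lemma~5.3]{CR2024}. The fiber metrics of this initial model are merely AC K\"ahler; the AC Calabi--Yau fiber metrics appear only at the intermediate stage of the \cite[\S~6]{CR2024} argument alluded to in Remark~\ref{CY.16}.

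You instead front-load that stage by inserting exact-class Kronheimer ALE metrics on the Milnor fibers \eqref{CY.10}. This is legitimate, since complex dimension $3$ is forced anyway by Remark~\ref{CY.4b}. It buys you an approximate metric whose Ricci potential already vanishes to leading order at each $\hH_i$. But it costs you three things the paper never needs:
\begin{enumerate}
\item[(i)] A family that is smooth in $\omega_{i,i}$. The Reeb flow does not preserve $C_{\epsilon_0}$; on $\hH_i$ it rotates the smoothing parameter. So you must compose with the rotation of the fiber cone and use uniqueness of the ALE Calabi--Yau metric in the trivial class.
\item[(ii)] Asymptotics of the fiber potentials in the embedded coordinates at rate $\min\{d_s,4\}$.
\item[(iii)] Cutoff estimates, with the gluing done on potentials. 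Use $r^2$ itself rather than the pullback of $\omega_{C_0}$ under a non-holomorphic projection, which is not of type $(1,1)$ on $C_{\epsilon_0}$.
\end{enumerate}

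Your assertion that $\rho^{2\nu}$ times the fiber cone metric matches $g_{C_0}$ is exactly where the absence of $h_1$ is needed. A degree-one term would contribute $\rho^{1+\nu}h_1$, which dominates $\rho^{2\nu}h_2$ since $\nu<1$. You should justify this rather than take it for granted.

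Finally, your account of the two alternatives on $(\beta,\nu)$ is speculative. Note that away from the singular rays $\widehat{x}_{\max}$ behaves like $r^{-(1-\nu)}$. So $\beta>\frac{2}{1-\nu}$ is precisely the condition that the Ricci potential already decays faster than $r^{-2}$ there.
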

\begin{proof}
Let $r$ be the distance to the origin in $C_0$ with respect to the metric $g_{C_0}$.  Then $r^2$ can be extended to a homogeneous function on all of $\bbC^{4}$.  By Assumption~\ref{CY.3}, near $\pa \widetilde{L}_i$, in the coordinates 
$$ 
  \widetilde{\xi}_{i}= |z_i|^{\frac{d-\ell}{w_i d_s}}, \quad \theta_i:= \arg (z_i), \quad \mbox{and} \quad \varpi_{i,j}:= \frac{z_j}{z_i^{\frac{w_j}{w_i}}} \; \mbox{for} \; j\ne i,
$$
we can assume that the potential $r^2$ takes the form
$$
         r^2= \widetilde{\xi}_{i}^{-\frac{2d_s}{d-\ell}}\left( h_0 +h_2(\theta_i, \varpi_i)+ h_{\ge 3}(\theta_i, \varpi_i) \right), 
$$ 
where $h_0$ is a constant, $h_2$ is smooth in $\theta_i$ and homogeneous of degree $2$ with respect to the $\bbR^+$-action on $\varpi_i=(\varpi_{i,i_1},\varpi_{i,i_2},\varpi_{i,i_3})$ specified by the multiweight $v_i$ as in Assumption~\ref{CY.13} by assigning the weight $v_{i,j}$ to $\varpi_{i,j}$ for $j\ne i$, while $h_{\ge 3}$ is smooth in $\theta_i$ and admits an expansion in homogeneous terms of degree at least $3$ with respect to the $\bbR^+$-action specified by $v_i$.  The fact that $h_0$ is constant is due to the property of $r^2$ being constant along the orbits of the Reeb vector field of $(C_0,g_0)$.  The fact that there is no term $h_1(\theta_i,\varpi_i)$ homogeneous of degree $1$ in the expansion is a consequence of \cite[Lemma~3.1]{CDR2019}.  

  Using \cite[Lemma~5.3]{CR2024}, we can then modify $r^2$ by modifying $h_2(\theta_i,\varpi_i)$ in a compact set for each $i$ (so modifying  $r^2$ in a compact region of $\widehat{\bbC^{4}_w}\setminus \hH_{s+1}$) to obtain a new function $u_{\epsilon_0}$ such that $\frac{\sqrt{-1}}2 \pa \db u_{\epsilon_0}$ is the K\"ahler form of a warped $\QAC$-metric outside some compact set of $C_{\epsilon_{0}}$.  Using again \cite[Lemma~5.3]{CR2024}, we can further modify $u_{\epsilon_0}$ on a compact set to ensure that $\omega_{\epsilon_0}:=\frac{\sqrt{-1}}2 \pa \db u_{\epsilon_0}$ is the K\"ahler form of a warped $\QAC$-metric everywhere on $C_{\epsilon}$.  
On the other hand, there is a natural holomorphic volume form $\Omega^3_{C_{\epsilon}}$ on $C_{\epsilon}$ defined implicitly by 
$$
    dz_1\wedge dz_2\wedge dz_3 \wedge dz_{4}|_{C_{\epsilon}}= \Omega^3_{C_{\epsilon}}\wedge dP|_{C_{\epsilon}}.
$$
The fact that $(C_0,g_{C_0})$ is Calabi-Yau means that the K\"ahler form $\omega_{C_0}$ of $g_{C_0}$ is such that
$$
   \omega_{C_0}^3= c_3\Omega^3_{C_0}\wedge\overline{\Omega}^3_{C_0}
$$ 
for some constant $c_3\in \bbC\setminus \{0\}$.  To obtain the desired Calabi-Yau metric, it suffices then to solve
the equation
$$
(\omega_{\epsilon_0}+\sqrt{-1}\pa\db v)^3= c_3\Omega^3_{C_{\epsilon_0}}\wedge\overline{\Omega}^3_{C_{\epsilon_0}},
$$
that is, to solve the complex Monge-Amp\`ere equation
\begin{equation}
    \log\left( \frac{(\omega_{\epsilon_0}+\sqrt{-1}\pa\db v)^3}{\omega_{\epsilon_0}^3}\right)= -\mathfrak{r}_{\epsilon_0},
\label{CY.17}\end{equation}
where 
$$
     \mathfrak{r}_{\epsilon_0}:= \log \left( \frac{\omega^3_{\epsilon_0}}{c_3\Omega^3_{C_{\epsilon_0}}\wedge \overline{\Omega}^3_{C_{\epsilon_0}}} \right)
$$
is the Ricci potential of $\omega_{\epsilon_0}$.  Proceeding as in the proof of \cite[Lemma~5.10]{CR2024} with \cite[(5.43)]{CR2024} replaced by 
\begin{equation}
\begin{aligned}
d\lrp{P(z)-\epsilon_0Q(z)} &= d(\xi^{-\ell} \lrp{P_{i,\omega_{i,i}}(\zeta_{i,i_1},\zeta_{i,i_2},\zeta_{i,i_3}) -\epsilon_0Q_{\ell}(\omega_i) }) + \cO(\xi^{1-\nu} \xi^{\nu-\ell}) \\
 &= d(\xi^{-\ell} \lrp{P_{i,\omega_{i,i}}(\zeta_{i,i_1},\zeta_{i,i_2},\zeta_{i,i_3}) -\epsilon_0Q_{\ell}(\omega_i) }) + \cO(\widetilde{\xi} \xi^{\nu-\ell}),
\end{aligned}
\label{CY.17b}\end{equation}
where $\cO(\widetilde{\xi} \xi^{\nu-\ell})$ is with respect to the local basis of $1$-forms 
$$
  \frac{d\xi_i}{\xi_i^2}, \quad \frac{d\omega_{i,i}}{\xi_i}, \quad \xi_i^{-\nu}d\zeta_{i,j}, \quad \mbox{for} \quad  j\in\{i_1,i_2,i_3\},
$$ 
we can first check that 
$$
     \mathfrak{r}_{\epsilon_0}\in \widehat{x}_{\max}^{\beta}\CI_{\nQb,1}(C_{\epsilon}).
$$
Using this, we can then proceed as in \cite[\S~6]{CR2024}, in particular \cite[Corollary~6.6]{CR2024}, to obtain the desired result.
\end{proof}
\begin{remark}
As warped $\QAC$-metrics, the metrics of Theorem~\ref{CY.14} are of bounded geometry.  Moreover, by \cite[Remarque~2.11]{Rochon2025}, their curvature is $\mathcal{O}(\widetilde{x}_{\max}^2 \rho^{-2\nu})$ at infinity for $\rho$ (\eg as in the model \eqref{CY.14a}) such that $\rho^{-1}$ is a weighted total boundary defining function on $\widehat{C}_{\epsilon}$  in the sense of \cite[Definition~2.3]{CR2024}.  In particular, the curvature decays like $r^{-2\nu}$ at infinity, though away from the singularities of the tangent cone at infinity, the decay improves to be quadratic.    
\label{bg.1}\end{remark}

\begin{remark}
The hypothesis \eqref{CY.14b} seems to be essential for our method to work.  Indeed, if not, then $\nu<0$ in the local model \eqref{CY.14a}.  Since the metric $g_{C_{i,\omega_i,\epsilon_0}}$ is not flat, this means the model \eqref{CY.14a} does not have bounded curvature.  Now, in an intermediate step of the proof of Theorem~\ref{CY.14}, we need to solve a complex Monge-Amp\`ere equation for a warped $\QAC$-metric having asymptotic model near $\hH_i$ \eqref{CY.14a} with  $g_{C_{i,\omega_i,\epsilon_0}}$ a non-flat asymptotically conical Calabi-Yau metric.  By \cite[\S~2 and 3]{Goldberg-Kobayashi1967}, this implies in particular that  $g_{C_{i,\omega_i,\epsilon_0}}$ has non-vanishing holomorphic bisectional curvature taking both positive and negative values.  This means that the holomorphic bisectional curvature of the corresponding warped $\QAC$-metric is unbounded above and below, seriously compromising the possibility of being able to solve the complex Monge-Amp\`ere equation \eqref{CY.17}.
\label{CY.16}\end{remark}

\section{Examples}\label{ex.0}

In this section, we will give examples of Calabi-Yau cones for which Theorem~\ref{CY.14} applies.  For the first two examples, we will need to invoke the following 
general result indicated to us by Vestislav Apostolov.
\begin{lemma}
A toric Calabi-Yau cone metric on a toric cone with orbifold link is smooth in the sense of orbifolds (away from the apex of the cone).
\label{ap.1}\end{lemma}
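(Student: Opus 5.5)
The approach is to describe the toric Calabi-Yau cone metric by its symplectic potential on the moment cone, using the Guillemin--Abreu--Martelli--Sparks--Yau framework. Let $C$ be the toric cone, with moment cone $\mathcal{C}=\{y\in\bbR^m : \langle y,u_a\rangle\ge 0,\ a=1,\dots,N\}$, where the $u_a$ are the primitive inward normals in the lattice of the torus $T^m$. Let $b$ be the Reeb vector determined by the Calabi-Yau condition (Martelli--Sparks--Yau volume minimization). Write $\ell_a(y)=\langle y,u_a\rangle$. The Calabi-Yau cone metric on $C\setminus\{0\}$ corresponds to a symplectic potential of the form
\[
G = G_{can} + G_b + h,\qquad G_{can}=\tfrac12\sum_a \ell_a\log \ell_a,\qquad G_b=\tfrac12\,\ell_b\log \ell_b-\tfrac12\,\ell_\infty\log\ell_\infty,
\]
where $\ell_b(y)=\langle b,y\rangle$, $\ell_\infty=\sum_a \ell_a$, and $h$ is homogeneous of degree one and smooth on the interior of $\mathcal{C}$.

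The key step is to show that $h$ is in fact smooth up to the boundary faces of $\mathcal{C}\setminus\{0\}$. Away from the apex, a point of $C$ corresponds to a point of a face $F$ of $\mathcal{C}$ other than the vertex. The local model there is $\bbC^k/\Gamma\times(\bbC^*)^{m-k}$, where $k$ is the number of facets meeting at $F$ and $\Gamma$ is the finite group measuring the failure of the normals $u_a$ at $F$ to span a saturated sublattice. By the orbifold version of Abreu's boundary criterion (Lerman--Tolman labelled polytopes), a metric given by a symplectic potential $G$ is orbifold-smooth at such points exactly when:
\begin{enumerate}[label=(\roman*)]
\item $G-G_{can}$ is smooth up to $F$, and
\item $\mathrm{Hess}(G)^{-1}$ extends smoothly and satisfies the usual nondegeneracy along $F$, with the labels equal to the lattice lengths of the $u_a$.
\end{enumerate}
Because the cone is assumed to have orbifold link, the labels are the primitive normals, so $G_{can}$ supplies precisely the right orbifold singular behaviour. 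It therefore remains to see that $G_b+h$ is smooth on $\mathcal{C}\setminus\{0\}$. The term $G_b$ is clearly smooth there, since $\ell_b>0$ and $\ell_\infty>0$ away from the apex.

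For $h$, I would use the Calabi-Yau equation. In symplectic coordinates Ricci-flatness reads $\det \mathrm{Hess}(G)=c\,\exp\bigl(2\langle\gamma,\nabla G\rangle - \dots\bigr)$, equivalently a real Monge--Ampère equation for the Legendre dual Kähler potential on $\bbR^m$ up to the affine term fixed by $b$. The plan is to work on the Kähler side instead. Pass to the orbifold cover $\bbC^k\times(\bbC^*)^{m-k}$ near the relevant stratum, pull the Kähler form back, and regard $\omega$ as a Kähler current that is smooth on the open dense torus orbit, bounded, and with locally bounded potential. By toric symmetry the potential is invariant, and the complex Monge--Ampère equation $\omega^m = c\,\Omega\wedge\overline\Omega$ holds with smooth positive right-hand side, since $\Omega$ lifts to a nonvanishing holomorphic volume form on the cover. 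The quasi-isometry to the model orbifold metric follows from the cone structure: the symplectic potential differs from $G_{can}$ by a homogeneous function whose Hessian is controlled. Evans--Krylov and Schauder bootstrapping then give smoothness on the cover.

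The main obstacle I expect is establishing the a priori $C^{1,1}$ (quasi-isometry) bound near the boundary faces, that is, showing $h$ is not merely continuous but has bounded Hessian relative to $G_{can}$. I would first try to handle this by the explicit structure of the existence proof. Futaki--Ono--Wang (or Berman's variational solution) produces the metric via continuity from a smooth orbifold Sasakian reference metric on the orbifold link, and their estimates are carried out on the orbifold, so they already yield orbifold smoothness. Failing that, I would use the Legendre-dual Guillemin boundary behaviour together with homogeneity to reduce to a compact slice of the link, where standard orbifold elliptic regularity applies.
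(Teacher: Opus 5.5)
\textbf{There is a genuine gap.} Your framework is the right one: the symplectic potential $G=G_{can}+G_b+h$ on the moment cone, with the orbifold Guillemin--Abreu boundary conditions encoding orbifold smoothness. But the proposal never proves the one nontrivial point, namely that $h$ is smooth up to the faces of $\mathcal{C}\setminus\{0\}$.

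Your sentence ``the quasi-isometry to the model orbifold metric follows from the cone structure'' is circular. Homogeneity of $h$ only relates different points along the singular stratum and reduces the question to a compact slice of the link. It gives no control transverse to the stratum, which is exactly where the $C^{1,1}$ bound is needed. Without that bound, the local Monge--Amp\`ere argument on the orbifold cover cannot work. B{\l}ocki's example $u=(1+|z_1|^2)|z'|^{4/3}$ on $\bbC^3$, with $z'=(z_2,z_3)$, shows why:
\begin{itemize}
\item it is continuous and plurisubharmonic;
\item it is torus-invariant and $\mathrm{U}(2)$-invariant in $z'$;
\item it is smooth off the complex line $\{z'=0\}$;
\item it satisfies $\det(u_{j\bar k})=\tfrac{8}{27}(1+|z_1|^2)$;
\item it is not $C^{1,1}$.
\end{itemize}
This is precisely the local picture $\bbC\times\bbC^2/\Gamma$ you would be in, so Evans--Krylov needs a global input you have not supplied.

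Your fallback does not supply it as stated either. Futaki--Ono--Wang is proved for smooth compact toric Sasaki manifolds, so saying their estimates ``are carried out on the orbifold'' is an unproved extension. Variational existence results give weak solutions that are smooth on the regular locus, which is exactly the situation the lemma is meant to improve. Moreover, any argument of the form ``the construction is orbifold-smooth'' also needs a uniqueness statement identifying the given toric Calabi--Yau cone metric with the constructed one, and you do not mention this.

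The paper's proof is close in spirit to your fallback, but it makes the missing ingredients precise and uses no regularity theory:
\begin{itemize}
\item By Lerman, the contact toric orbifold is encoded by its moment cone and lattice normals.
\item Legendre describes toric cone metrics by a matrix-valued function $\hat{\mathbf{H}}$ on the cone subject to boundary conditions. Under this description, constant scalar curvature is equivalent to Abreu's equation on the labelled transversal polytope $(\Delta_b,u_b)$. This extends to orbifolds because the compactification criterion it rests on is proved for orbifolds by Apostolov--Calderbank--Gauduchon--T{\o}nnesen-Friedman.
\item For the Martelli--Sparks--Yau Reeb vector $b_0$ (the unique one with vanishing Futaki invariant), the possibly non-rational labelled polytope $(\Delta_{b_0},u_{b_0})$ is monotone with zero Futaki invariant.
\item Legendre's extension of Wang--Zhu (2016, Theorem~1.6) then gives a unique solution of Abreu's equation \emph{within the class of potentials already satisfying the boundary conditions}.
\end{itemize}
The resulting Ricci-flat cone metric is therefore orbifold smooth off the apex by construction. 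What your argument is missing is this existence theorem for labelled, not necessarily rational, polytopes, together with uniqueness of $b_0$ and of the solution.
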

\begin{proof}
 Let $Y= \bbR^+\times N$ be a cone of real dimension $2n+2$ with link $(N,D,\hat{T})$ a contact toric orbifold.  By \cite{Lerman2003}, this contact toric orbifold can be completely described in terms of its image $C:=\mu(N)\subset \subset (\bbR^{n+1})^*$ by the moment map $\mu: N\to (\bbR^{n+1})^*$.  It is a polyhedral cone in $(\bbR^{n+1})^*$ of the form
$$
    C= \{ y\in (\bbR^{n+1})^*\; | \; L_i(y)\ge 0\},
$$
where the normals $L_i\in \hat{\mathfrak{t}}=\operatorname{Lie}(\hat{T})\cong \bbR^{n+1}$ above are in the lattice $\Lambda\subset \hat{\mathfrak{t}}$ yielding the identification $\hat{T}= \hat{\mathfrak{t}}/\Lambda$.  These normals are determined by the primitive normals of the orbifold structure of $N$.  

In \cite{Legendre2011}, Legendre describes almost K\"ahler toric cone metrics on $Y$ in terms of a smooth matrix-valued function $\hat{\mathbf{H}}$ defined on $C$ and satisfying the boundary conditions of \cite[Proposition~2.11]{Legendre2011}.  This is formulated in the smooth case, namely the polyhedral cone is assumed to be good in the sense of \cite[Definition~2.4]{Legendre2011}, but the result remains true in the orbifold setting since the conditions of compactification in \cite[Lemma~2.8]{Legendre2011} are stated and proved in \cite[Lemma 2]{ACGT2004} for orbifolds.  In fact, with this observation, the results of \cite{Legendre2011} generalize to orbifolds, in particular a toric cone K\"ahler metric with respect to a Reeb vector field $b\in \hat{\mathfrak{t}}$ on $Y$ is of constant scalar curvature if and only if the matrix-valued function $\hat{\mathbf{H}}$ comes from a solution to Abreu's equation on the labeled transversal polytope $(\Delta_{b},u_b)$ \cite[Corollary~2.17]{Legendre2011}. 

Now, in the Calabi-Yau case, by \cite{MSY2006} (see also \cite[Theorem 3.12]{Legendre2011}), $Y$ admits a unique Reeb direction $b_0\in\hat{\mathfrak{t}}$ with Futaki invariant equal to zero.  For this direction, the transversal polytope $(\Delta_{b_0},u_{b_0})$ is monotone in the sense of \cite[Definition~3.10]{Legendre2011} and its Futaki invariant is zero by \cite[Proposition~3.7]{Legendre2011}.  On the other hand, by \cite[Theorem~1.6]{Legendre2016}, the transversal polytope $(\Delta_{b_0},u_{b_0})$ admits  a unique solution to Abreu's equation which in this case yields a K\"ahler-Einstein metric.  By the discussion above, this induces on $Y$ a K\"ahler Ricci-flat cone metric on $Y$ which is smooth in the sense of orbifolds.
\end{proof}

This allows us to apply Theorem~\ref{CY.14} to a smoothing of a singular irregular Calabi-Yau cone.  More precisely, for integers $k,l \ge 1$, consider the $3$-dimensional hypersurface singularity
$$
C_{k,l} = \{ u^k v^l=wz \} \subset \bbC^4.
$$
This is a toric Gorenstein singularity. Its singular set is given by
$$
\{u=w=z=0\}\cup \{v=w=z=0\},
$$
where the second component is absent if $l=1$ and the first is absent if $k=1$. In particular, when $(k,l)=(2,1)$, one recovers the suspended pinch point singularity (SPP). Note that when $k=l=1$, one recovers the $3$-dimensional $A_1$ singularity (the conifold).

The toric diagram of $C_{k,l}$ is given by the convex hull of the four lattice points
\[
  v_1=(0,0),\:\: v_2=(k,0),\:\: v_3=(l,1), \:\: v_4=(0,1) \in \bbZ^2,
\]
and is identified with the cross section $\mathcal{C} \cap \{x_1=1\}$ of the strongly convex rational polyhedral cone $\mathcal{C} \subset \bbR^3$ generated by $\tilde v_i=(1,v_i)\in\mathbb Z^3$. The dual cone $\mathcal{C}^*$ is generated by the four primitive inward-pointing normals of
$\mathcal{C}$:
\[
  u=(1,0,-1),\:\: v=(0,0,1),\:\: w=(0,1,0),\:\: z=(k,-1,l-k) \in \bbZ^3.
\]
The primitive generators of $\mathcal C^*$ give holomorphic coordinates $(u,v,w,z)$, which satisfy the unique relation
$$
u^k v^l=wz.
$$
Hence the associated affine toric variety is isomorphic to $C_{k,l}=\{u^k v^l=wz\}\subset\bbC^4$.

In the physics literature, Cveti\v{c}-L\"{u}-Page-Pope \cite{CveticLuPagePope2005} and independently  Martelli-Sparks \cite{MartelliSparks2005} constructed a large class of explicit Ricci-flat K\"ahler cone metrics whose links are five-dimensional Sasaki-Einstein orbifolds $L^{p,q,r}$. In particular, $C_{k,l}$ can be identified with the cone over $L^{l,k,l}$. In their notation, $\ell_1,\ell_2,\partial_\phi,\partial_\psi$ are Killing vector fields tangent to the torus fibers; torically, they correspond to the four primitive inward-pointing normals of $\mathcal{C}^*$. They satisfy
$$
p\ell_1+q\ell_2+r\partial_\phi+s\partial_\psi=0,\qquad p+q=r+s.
$$
Up to an affine unimodular transformation of $\mathbb Z^3$, they satisfy
$$
\begin{pmatrix}
  -\ell_1 \\
  -\ell_2 \\
  \partial_\phi \\
  \partial_\psi
\end{pmatrix}
=
\begin{pmatrix}
  1 & 0 & 0 \\
  1 & q & A \\
  1 & r & B\\
  1 & 0 & 1
\end{pmatrix}
\begin{pmatrix}
  e_1 \\
  e_2 \\
  e_3
\end{pmatrix}
$$
with $qB-rA=s$, where $e_{1},\,e_{2},\,e_{3}$ form the standard basis of $\mathbb{R}^{3}$. Setting $(p,q,r)=(l,k,l)$ and $(A,B)=(0,1)$, we recover the toric diagram for $C_{k,l}$. For a similar derivation of the toric diagram of $L^{p,q,r}$, see Butti-Forcella-Zaffaroni \cite{ButtiBorcellaZaffaroni2005}.

Without the explicit use of the Calabi-Yau cone metric itself, the corresponding Reeb vector field can be computed variationally through volume minimization, due to Martelli-Sparks-Yau \cite{MSY2006}, for three-dimensional toric Calabi-Yau cones.

Let $\xi=(\alpha, \beta, \gamma)$ denote the Reeb vector field. For $t \in \bbC^*$, the induced $\bbC^*$ action is given by
\[
  t \cdot (u,v,w,z)
  = (t^{\alpha-\gamma}u,\, t^{\gamma}v,\, t^{\beta}w,\, t^{k\alpha-\beta+(l-k)\gamma}z).
\]
Martelli-Sparks-Yau \cite{MSY2006} showed that the Reeb vector field of a Sasaki-Einstein link is characterized variationally as the unique minimizer of a natural volume functional $Z(\xi)$; see \cite[p.59]{MSY2006} and also \cite[\S 6.2]{GabellaDPhil2011} for the precise definition and its explicit computation in this setting.

The corresponding Reeb vector field is then given by
$$
\xi = (\alpha,\beta,\gamma)
=
\left(
3,\,
\frac{3kl}{2\,(k+l-\sqrt{k^2-kl+l^2})},\,
\frac{3k}{\sqrt{k^2-kl+l^2}+2k-l}
\right).
$$
Equivalently, the induced weights of the coordinates $(u,v,w,z)$ are
\begin{align*}
  wt(u)&=\alpha-\gamma
  =\frac{3(\sqrt{k^2-kl+l^2}+k-l)}{\sqrt{k^2-kl+l^2}+2k-l}, \\
  wt(v) &=\gamma
  =\frac{3k}{\sqrt{k^2-kl+l^2}+2k-l}, \\
  wt(w) &=\beta
  =\frac{3kl}{2\,(k+l-\sqrt{k^2-kl+l^2})}, \\
  wt(z) &= k\alpha-\beta+(l-k)\gamma
  =\frac{3kl}{2\,(k+l-\sqrt{k^2-kl+l^2})}.  
\end{align*}

We now turn to producing Calabi-Yau metrics on certain smoothings of $C_{k,l}$. Perform the following change of variables
$$
u = z_2, \:\: v = z_1, \:\: w = z_3+iz_4, \:\: z = -z_3+iz_4.
$$
Then $C_{k,l}$ is given by the equation
$$
    z_1^lz_2^k+z_3^2+z_4^2=0
$$
in $\bbC^4$.

To satisfy Assumption~\ref{CY.13}, we will consider the following two cases: (1) $l=1, k \ge 2$ and (2) $l=k \ge 2$.  Let us first consider case (1).

\begin{example} 
Suppose that $l=1, k\ge 2$.  First of all, note that the Reeb vector field is irrational, so the Reeb action is irregular. The cone $C_{1,k}$ is singular along the $z_1$-axis, that is, at $z_2=z_3=z_4=0$, so $s=1$. Since $(z_1,z_2,z_3,z_4)$ has multiweight
$$
\left(\frac{2k-1-\sqrt{k^2-k+1}}{k-1},
  \frac{k-2+\sqrt{k^2-k+1}}{k-1},
  \frac{k+1+\sqrt{k^2-k+1}}{2},
  \frac{k+1+\sqrt{k^2-k+1}}{2}
\right),
$$
the defining polynomial $P(z)$ has weighted degree $d=k+1+\sqrt{k^2-k+1}$.

The polynomial $P_{1,\omega_1}$ takes the form
$$
P_{1,\omega_{1,1}}(\omega_{1,2},\omega_{1,3},\omega_{1,4}):= \omega_{1,1}\omega_{1,2}^k+ \omega_{1,3}^2+\omega_{1,4}^2.
$$
The equation $P_{1,\omega_{1,1}}(\omega_{1,2},\omega_{1,3},\omega_{1,4})=0$ has an $A_{k-1}$-singularity at the origin.  As such, it is a Calabi-Yau cone with $\bbR^+$-action induced by the multiweight $(2,k,k)$ for $(\omega_{1,2},\omega_{1,3},\omega_{1,4})$.  Hence $d_s=2k$ and Assumption~\ref{CY.13} holds.  By Lemma~\ref{ap.1}, the Calabi-Yau cone metric $g_{C_{k,l}}$ on $C_{k,l}$ is smooth in the sense of orbifolds on $C_{k,l}\setminus \{0\}$, confirming that Assumption~\ref{CY.3} holds.
  
Hence, we can consider a smoothing of weighted degree
$$
   d= k+1+\sqrt{k^2-k+1} > \ell\ge d-d_s= -k+1+\sqrt{k^2-k+1}.
$$
For instance, consider the smoothing of $C_{1,k}$ given by perturbing by the polynomial $Q=z_1^m+1$ with $1 \le m \le 2k-1$. Then the weighted degree $\ell$ of $Q$ satisfies the above inequality. Thus the smoothing is defined by
\begin{equation}
   z_1z_2^k+z_3^2+z_4^2=z_1^m+1.
\label{sm.1}\end{equation}
In this case, equation \eqref{CY.10} with $i=1$ and $\epsilon=1$ corresponds to the smoothing
$$
       \omega_{1,1} \zeta_{1,2}^k+ \zeta_{1,3}^2+ \zeta_{1,4}^2= \omega_{1,1}^m.
$$ 
Hence, Assumption~\ref{CY.12} holds with $\epsilon_0=1$. However, to apply Theorem~\ref{CY.14} with $\beta=\min\{d_s,4\} = 4$ and $\nu=\frac{d_s-(d-\ell)}{d_s}$, one needs to restrict $m$ to a smaller range. For small $k$, one has the following range for $m$:
\begin{itemize}
\item $k=2$: $m=1,2,3$,
\item $k=3$: $m=1,2,3,4$,
\item $k=4$: $m=1,2,3,4,5,6$.
\end{itemize}

By Theorem~\ref{CY.14}, for each admissible $m$ we obtain a Calabi-Yau warped $\QAC$-metric on the smoothing given by \eqref{sm.1}.
\label{SPP.1}\end{example}

Next we consider case (2). 
\begin{example}
If we suppose that $k=l \ge 2$, then the multiweight for $(z_1,z_2,z_3,z_4)$ simplifies to
$$
\left(\frac{3}{2},\frac{3}{2},\frac{3k}{2},\frac{3k}{2}\right),
$$
and the defining polynomial
$$
P(z) = z_1^kz_2^k+z_3^2+z_4^2
$$
has weighted degree $d=3k$ and the cone $C_{k,k}$ is singular along the $z_1$-axis and the $z_2$-axis, so $s=2$.  For $i\in\{1,2\}$, the polynomial $P_{i,\omega_{i,i}}(\omega_i)$ takes the form
$$
         P_{1,\omega_{1,1}}(\omega_{1,2},\omega_{1,3},\omega_{1,4})= \omega_{1,1}^k\omega_{1,2}^k+\omega^2_{1,3}  + \omega_{1,4}^2\quad  \mbox{and} \quad   P_{2,\omega_{2,2}}(\omega_{2,1},\omega_{2,3},\omega_{2,4})= \omega_{2,1}^k\omega_{2,2}^k+\omega^2_{2,3}  + \omega_{2,4}^2.
$$
As in the previous example, the corresponding cones $C_{1,\omega_{1,1}}$ and ${C_{2,\omega_{2,2}}}$ are Calabi-Yau with an $A_{k-1}$ singularity at the origin.  Thanks to Lemma~\ref{ap.1}, this means that Assumption~\ref{CY.3} holds, while Assumption~\ref{CY.13} holds with $d_s=2k$.  
Correspondingly, we have $\beta = 4$ and $\nu = \frac{\ell-k}{2k}$. The condition $d > \ell > d-d_s$ implies that $k\le\ell < 3k$. The condition that either $\beta > \frac{2}{1-\nu}$ or $\beta \le \frac{2}{1-\nu} < 9$ then forces $\ell$ to satisfy
$$
k \le \ell <\frac{23k}{9}.
$$

For instance, we could take $Q(z) = z_1^m+z_2^m+c$ for $\frac{2k}3 \le m<\frac{46 k}{27}$ and $c\in \bbC$.  For $c$ generic, 
$$
     P(z)=Q(z)
$$  
will then define a smooth hypersurface.
In this case, equation \eqref{CY.10} with $i=1$ and $\epsilon=1$ takes the form
$$
      \omega_{1,1}^k\zeta_{1,2}^k+ \zeta_{1,3}^2+ \zeta_{1,4}^2= \omega_{1,1}^m,
$$
since $\xi_1=\omega_{1,2}=0$ on $\hH_1$, while for $i=2$ and $\epsilon=1$, it takes the form
$$
  \omega_{2,2}^k\zeta_{2,1}^k+ \zeta_{2,3}^2+ \zeta_{2,4}^2= \omega_{2,2}^m,
$$
since $\xi_2=\omega_{2,1}=0$ on $\hH_2$, showing that Assumption~\ref{CY.12} holds in this case.   We can therefore apply Theorem~\ref{CY.14} to obtain Calabi-Yau warped $QAC$-metrics on this smoothing.
\label{SPP.2}\end{example}

\begin{remark}
The above two examples are significant in that neither follows from existing constructions of complete Calabi--Yau metrics; in particular, the irregular tangent cone at infinity is neither a product nor an orbifold.  They also differ from the examples of \cite{Nghiem}, since those are of complex dimension strictly larger than $3$. 
\end{remark}
\begin{remark}
By Remark~\ref{CY.16}, our method does not apply to
the Milnor fiber
$$
z_1z_2^2+z_3^2+z_4^2+1=0.
$$
It remains an intriguing open question whether a complete Calabi-Yau metric exists on this manifold.
\end{remark}

We can also consider smoothings of orbifold Calabi-Yau cones.

\begin{example}
For an integer $n\ge 2$, consider the Calabi-Yau cone $\bbC^3/\bbZ_n^2$ (\cf \cite[Example 9.9.9]{Joyce} when $n=2$) with the $\bbZ_n\oplus\bbZ_n$ action given by
$$
      (e^{\frac{2\pi k_1 \sqrt{-1}}n},e^{\frac{2\pi k_2 \sqrt{-1}}n})\cdot (x_1,x_2,x_3)=(e^{\frac{2\pi k_1 \sqrt{-1}}n}x_1, e^{\frac{2\pi (k_2-k_1) \sqrt{-1}}n}x_2, e^{-\frac{2\pi k_2 \sqrt{-1}}n}x_3).
$$
Setting $z_i:=x_i^n$ for $i\in\{1,2,3\}$ and $z_4:= x_1x_2x_3$ allows us to describe this cone as the affine hypersurface
$$
 z_1z_2z_3-z_4^n=0
$$
in $\bbC^4$.  In this case, the multiweight is given by $w=(n,n,n,3)$ and the weighted degree of the polynomial is $d=3n$.  The cone is singular along the $z_1$, $z_2$, and $z_3$ axes.  The Calabi-Yau cone metric is just the metric induced by the Euclidean metric on $\bbC^3$, so it is automatically smooth in the sense of orbifolds, showing that Assumption~\ref{CY.3} holds with $s=3$.  For $i\le 3$, the polynomial $P_{i,\omega_{i,i}}$ takes the form
$$
     P_{i,\omega_{i,i}}(\omega_{i,i_1},\omega_{i,i_2},\omega_{i,i_3})=\omega_{i,1}\omega_{i,2}\omega_{i,3}-\omega_{i,4}^n.
$$
Moreover, the $\bbR^+$-action on $C_{i,\omega_{i,i}}$ is induced by a weighted action with weight $2$ for $\omega_{i,4}$ and weight $n$ for the other two variables. In particular, $P_{i,\omega_{i,i}}$ is homogeneous of weighted degree $d_s=2n$ for each $i\le 3$, ensuring that Assumption~\ref{CY.13} holds with $d_s=2n$.  

We can therefore consider the smoothing obtained by perturbing by a polynomial of weighted degree 
$$
\ell\ge d-d_s=3n-2n=n.
$$  
For $\ell= 2n$, we can in particular consider the perturbation $Q=z_1^2+z_2^2+z_3^2+1$ of weighted degree $2n$, namely the smoothing given by the equation
\begin{equation}
z_1z_2z_3-z_4^n=z_1^2+z_2^2+z_3^2+1.
\label{sm.2}\end{equation}
In this case, for $i=1$, equation \eqref{CY.10} takes the form
\begin{equation}
\omega_{1,1}\zeta_{1,2}\zeta_{1,3}-\zeta_{1,4}^n= \omega_{1,1}^2,
\label{an.1}\end{equation}
since $\xi_1=\omega_{1,2}=\omega_{1,3}=0$ on $\hH_1$. In particular, it is smooth.  There are similar results for the equation \eqref{CY.10} when $i\in\{2,3\}$, so that Assumption~\ref{CY.12} holds.  Hence, Theorem~\ref{CY.14} can be applied to the smoothing \eqref{sm.2} with $\beta= \min\{2n, 4\}=4$ and $\nu=\frac12$.  
\end{example}

\begin{example}
In the previous example, we can also take $\ell=n$ with the smoothing 
$$
z_1z_2z_3-z_4^n=z_1+z_2+z_3.
$$  
One can check that Assumption~\ref{CY.12} holds again for this smoothing.  In this case, $\nu=\frac{d_s-(d-\ell)}{d_s}=0$, which means that Theorem~\ref{CY.14} with $\beta=\min\{2n,4\}$ produces a Calabi-Yau $\QAC$-metric (i.e. there is no warping factor) on the smoothing.  
\end{example}

\section{Complete intersections} \label{ci.0}

In this section, we will illustrate how our method can also be applied in a case where the Calabi-Yau cone is given by a complete intersection.  Consider the Calabi-Yau cone $\bbC^3/(\bbZ_2\oplus \bbZ_4)$ with action of $\bbZ_2\oplus\bbZ_4$ given by
$$
   ((-1)^{k_1},\sqrt{-1}^{k_2})\cdot (x_1,x_2,x_3)= ((-1)^{k_1}x_1, (-1)^{k_1}\sqrt{-1}^{k_2}x_2,\sqrt{-1}^{-k_2}x_3).
$$
Setting $z_1=x_1^2$, $z_2=x_2^4$, $z_3=x_3^4$, $z_4=x_1x_2x_3$, and $z_5=x_2^2x_3^2$ yields a description of the cone as the complete intersection
$$
    P_1(z):=z_1z_5-z_4^2=0 \quad \mbox{and} \quad P_2(z):=z_2z_3-z_5^2=0
$$
in $\bbC^5$.  The natural $\bbR^+$-action on $\bbC^5$ is specified by the multiweight $w=(2,4,4,3,4)$, so $P_1$ and $P_2$  have respectively weighted degrees $d_1=6$ and $d_2=8$.

In this case, the singularities are given by the $z_1$-axis $L_1$, the $z_2$-axis $L_2$, and the $z_3$-axis $L_3$, where
$$
  L_i= \{ (z_1,\ldots,z_5)\in \bbC^5\; | \; z_j=0 \; \mbox{for} \; j\ne i\}.
$$
Using the coordinates
\begin{equation}
    \xi_i:= \frac{1}{|z_i|^{\frac{1}{w_i}}}\quad \mbox{and}  \quad \omega_i:=(\omega_{i,1},\omega_{i,2},\omega_{i,3},\omega_{i,4},\omega_{i,5})\quad \mbox{with} \quad \omega_{i,j}:= \frac{z_j}{|z_i|^{\frac{w_j}{w_i}}},
\label{cico.1}\end{equation}
the singularities along the $z_1$-axis are described by the equations
\begin{equation}
  P_{1,1,\omega_{1,1}}(\omega_{1,2},\omega_{1,3},\omega_{1,4},\omega_{1,5}):= \omega_{1,1}\omega_{1,5}-\omega_{1,4}^2=0 \quad \mbox{and} \quad P_{2,1,\omega_{1,1}}(\omega_{1,2},\omega_{1,3},\omega_{1,4},\omega_{1,5}):= \omega_{1,2}\omega_{1,3}-\omega_{1,5}^2=0.  
\label{ci.1}\end{equation}
In particular, the second equation 
\begin{equation}
   \omega_{1,2}\omega_{1,3}-\omega_{1,5}^2=0
\label{ci.1b}\end{equation}
is an affine hypersurface in $\bbC^3$ admitting a Calabi-Yau cone metric with $\bbR^+$-action induced by the $\bbR^+$-action on $\bbC^3$ assigning the weight $2$ to $\omega_{1,2},\omega_{1,3}$ and $\omega_{1,5}$.  In particular, the weighted degree of $P_{1,1,\omega_{1,1}}$ with respect to this weighted action is $d_s=4$.  

The singularities along the $z_2$-axis are instead described by the equations
\begin{equation}
P_{1,2,\omega_{2,2}}(\omega_{2,1},\omega_{2,3},\omega_{2,4},\omega_{2,5}):= \omega_{2,1}\omega_{2,5}-\omega_{2,4}^2=0 \quad \mbox{and} \quad 
P_{2,2,\omega_{2,2}}(\omega_{2,1},\omega_{2,3},\omega_{2,4},\omega_{2,5}):= \omega_{2,2}\omega_{2,3}-\omega_{2,5}^2=0.
\label{ci.2}\end{equation}
The first equation
\begin{equation}
\omega_{2,1}\omega_{2,5}-\omega_{2,4}^2=0
\label{ci.2b}\end{equation}
describes an affine hypersurface in $\bbC^3$ admitting a Calabi-Yau cone metric with $\bbR^+$-action induced by the $\bbR^+$-action on $\bbC^3$ assigning the weight $2$ to each variable.   In particular, the weighted degree of $P_{1,2,\omega_{2,2}}$ with respect to this weighted action is again $d_s=4$. Finally, the singularities along the $z_3$-axis admit a similar description with $z_2$ and $z_3$ interchanged.

Consider the smoothing $C_1$ of $C_0$ given by
\begin{equation}
   P_1(z)=Q_1(z) \quad \mbox{and} \quad P_2(z)=Q_2(z),
\label{ci.2c}\end{equation}
with $Q_1(z):=z_2+z_3$ and $Q_2(z):=z_1^3+1$ polynomials respectively of weighted degree $\ell_1=4$ and $\ell_2=6$. In the coordinates \eqref{cico.1} near $\pa\overline{\bbC^5_w}$, these equations take the form
\begin{equation}
    P_1(\omega_i)= \xi_i^2 Q_{1}(\omega_i) \quad \mbox{and} \quad P_2(\omega_i)= \xi_i^2 \omega_{i,1}^3+ \xi_i^8 
\label{ci.4}\end{equation}
and are singular along $\pa\overline{L}_1,\pa\overline{L}_2$, and $\pa \overline{L}_3$. To resolve these singularities, we can consider the blow-ups of $\pa (\overline{L_1+L_4})$ and $\pa(\overline{L_2+L_3})$ in $\overline{\bbC^5_w}$, but to ease the comparison with the resolution used in \S~\ref{mr.0}, we will first introduce the manifold with boundary $\widetilde{\bbC^5_w}$, which as a topological space is homeomorphic to $\overline{\bbC^5_w}$, but with space of smooth functions corresponding to smooth functions on $\bbC^5$  admitting a smooth expansion in integer powers of $\widetilde{x}_{\max}:=x_{\max}^{\frac{2}{d_s}}=\xi^{\frac12}$ (instead of integer powers of $x_{\max}$) for $x_{\max}$ a choice of boundary defining function for $\overline{\bbC^5_w}$.  Let $\widetilde{L_1+L_4}$ and $\widetilde{L_2+L_3}$ be the closure of $L_1+L_4$ and $L_2+L_3$ in $\widetilde{\bbC^5_w}$.   We shall then consider the manifold with corners
\begin{equation}
  \widehat{\bbC^5_w}:= [[\widetilde{\bbC^5_w}; \pa(\widetilde{L_1+L_4})]_{\widetilde{v}_1} \pa(\widetilde{L_2+L_3})]_{\widetilde{v}_2}
\label{ci.3}\end{equation}  
obtained by blowing up $\pa(\widetilde{L_1+L_4})$ and $\pa(\widetilde{L_2+L_3})$ in the sense of \cite[\S~4]{CR2024} with respect to the multiweight $\widetilde{v}_1$ and $\widetilde{v}_2$, 
where $\widetilde{v}_1$ is the multiweight assigning the weight $1$ to $\widetilde{\xi}_i=\xi_i^{\frac12}$ and the weight $2$ to the  variables $\omega_{i,2},\omega_{i,3}$, and $\omega_{i,5}$ in the coordinates \eqref{cico.1} for $i\in\{1,4\}$, while $\widetilde{v}_2$ is the multiweight assigning the weight $1$ to $\widetilde{\xi}_{i}:=\xi_i^{\frac12}$ and the weight $2$ to the variables $\omega_{i,1},\omega_{i,4}$, and $\omega_{i,5}$ in the coordinates \eqref{cico.1} for $i\in\{2,3\}$.
Since $\pa(\widetilde{L_1+L_4})$ and $\pa(\widetilde{L_2+L_3})$ are disjoint, notice that their weighted blow-ups commute.  Let $\hH_1$ and $\hH_2$ be the boundary hypersurfaces created by the blow-ups of $\pa(\widetilde{L_1+L_4})$ and $\pa(\widetilde{L_2+L_3})$.  Let $\hH_3$ be the lift of $\pa\widetilde{\bbC^5_w}$ to $\widehat{\bbC^5_w}$.  In terms of the coordinates $(\xi_i,\omega_i)$ for $i=1$, the blow-up of $\pa(\widetilde{L_1+L_4})$ corresponds to introducing the coordinates
$$
       \widetilde{\xi}_1=\xi_1^{\frac12}, \quad\omega_{1,1}, \quad\zeta_{1,2}:= \frac{\omega_{1,2}}{\xi_1}, \quad \zeta_{1,3}:= \frac{\omega_{1,3}}{\xi_1}, \quad \omega_{1,4}, \quad \zeta_{1,5}:=\frac{\omega_{1,5}}{\xi_1}.
$$
These are good coordinates on the interior of $\hH_1$.  In terms of these coordinates, the equations \eqref{ci.4} take the form
\begin{equation}
\widetilde{\xi}_1^2\omega_{1,1}\zeta_{1,5}-\omega_{1,4}^2= \widetilde{\xi}_1^4Q_1(\omega_1) \quad \mbox{and} \quad  \zeta_{1,2}\zeta_{1,3}-\zeta_{1,5}^2= \omega_{1,1}^3+ \widetilde{\xi}_1^{12}.
\label{ci.5}\end{equation}
On $\hH_1$, these equations restrict to give
\begin{equation}
   \omega_{1,4}=0 \quad \mbox{and} \quad  \zeta_{1,2}\zeta_{1,3}-\zeta_{1,5}^2=\omega_{1,1}^3,
\label{ci.6}\end{equation}
which is clearly a smooth hypersurface for each value of $\omega_{1,1}\in \pa\widetilde{L}_1$.  

On the other hand, in terms of the coordinates \eqref{cico.1} for $i\in\{2,3\}$, the blow-up of $\pa(\widetilde{L_2+L_3})$ amounts to introducing the coordinates
$$
    \widetilde{\xi}_{i}=\xi_i^{\frac12},\quad \zeta_{i,1}:= \frac{\omega_{i,1}}{\xi_i}, \quad\omega_{i,2},\quad \omega_{i,3}, \quad \zeta_{i,4}:=\frac{\omega_{i,4}}{\xi_i}, \quad \zeta_{i,5}:= \frac{\omega_{i,5}}{\xi_i}.
$$
These coordinates are good coordinates near the interior of $\hH_2$.  In terms of these coordinates, the equations \eqref{ci.4} take the form
\begin{equation}
\zeta_{i,1}\zeta_{i,5}-\zeta_{i,4}^2= \omega_{i,2}+\omega_{i,3} \quad \mbox{and} \quad \omega_{i,2}\omega_{i,3}- \widetilde{\xi}_{i}^4\zeta_{i,5}^2= \widetilde{\xi}_{i}^{10}\zeta_{i,1}^3 + \widetilde{\xi}_{i}^{16}.
\label{ci.7}\end{equation}
When we take $i=2$ and those equations are restricted to $\hH_2$, this yields the family of smooth affine hypersurfaces
\begin{equation}
\zeta_{2,1}\zeta_{2,5}-\zeta_{2,4}^2= \omega_{2,2}+\omega_{2,3} \quad \mbox{with} \quad \omega_{2,2}\in \pa \widetilde{L}_2\cong \bbS^1, \; \omega_{2,3}=0.
\label{ci.8}\end{equation}
For $i=3$, the restriction to $\hH_2$ is instead given by
\begin{equation}
\zeta_{3,1}\zeta_{3,5}-\zeta_{3,4}^2= \omega_{3,2}+\omega_{3,3} \quad \mbox{with} \quad \omega_{3,2}=0, \; \omega_{3,3}\in \pa\widetilde{L}_3\cong \bbS^1.
\label{ci.9}\end{equation}
If $\widehat{C}_1$ denotes the closure of the smoothing $C_1$ in $\widehat{\bbC^5_w}$, this shows that $\widehat{C}_1$ is a manifold with corners with boundary hypersurfaces given by $\widehat{C}_1\cap \hH_1$, $\widehat{C}_1\cap \hH_2$ (having two connected components at $\omega_{3,2}=0$ and $\omega_{2,3}=0$), and $\widehat{C}_1\cap\hH_3$.  In this case, since $x_{\max}= \widetilde{x}_{\max}^2$ is an integer power of $\widetilde{x}_{\max}$, notice that $\widehat{C}_1$ is automatically a smooth $p$-submanifold of $\widehat{\bbC^5_w}$.   

In this complete intersection setting, we see then that the natural generalization of Assumption~\ref{CY.3} still holds, while Assumption~\ref{CY.12} corresponds to \eqref{ci.7}, \eqref{ci.8}, and \eqref{ci.9} defining smooth affine varieties.  For Assumption~\ref{CY.13}, the correct adaptation is this assumption applied to the affine hypersurfaces \eqref{ci.1b} and \eqref{ci.2b} with weighted degree $d_s=4$. This yields the following result.
\begin{theorem}
The smoothing $C_1$ in \eqref{ci.2c} admits a Calabi-Yau warped $\QAC$-metric with compactification $\widehat{C}_1$ and with weight 
$\nu= \frac12$.
\label{ci.10}\end{theorem}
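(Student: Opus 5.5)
We follow the proof of Theorem~\ref{CY.14}, adapting each step to the complete intersection setting of \cite{CR2024}, where the cone $C$ is already a complete intersection. The first step is to identify the parameters. The warping exponent is $\nu=\frac{d_s-(d_j-\ell_j)}{d_s}$, and with $d_s=4$ it must come out the same for both equations. For $P_1$ we have $d_1-\ell_1=6-4=2$, and for $P_2$ we have $d_2-\ell_2=8-6=2$. This common value $2$ is precisely what justifies the choice $\widetilde{x}_{\max}=\xi^{\frac12}=x_{\max}^{2/d_s}$, and it gives $\nu=\frac{4-2}{4}=\frac12$.

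With $\beta=\min\{d_s,4\}=4$, we have $\frac{2}{1-\nu}=4$. So we are in the case $3<\beta\le\frac{2}{1-\nu}<9$, which is exactly the range in which \cite[Corollary~6.6]{CR2024} applies. Since $\widehat{C}_1$ has already been shown to be a smooth $p$-submanifold of $\widehat{\bbC^5_w}$, whose boundary faces are fibred by the smooth affine varieties \eqref{ci.6}, \eqref{ci.8} and \eqref{ci.9}, the geometric setup of warped $\QAC$-metrics is available. It is even simpler here than in \S~\ref{mr.0}, because no loss of regularity $\cC^k$ occurs.

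Next we construct an approximate metric. Take $r^2$, the squared radius of the flat orbifold cone $\bbC^3/(\bbZ_2\oplus\bbZ_4)$, which is $|x_1|^2+|x_2|^2+|x_3|^2$, extended homogeneously to $\bbC^5$. Near each $\pa(\widetilde{L_1+L_4})$ and $\pa(\widetilde{L_2+L_3})$, expand it as in the proof of Theorem~\ref{CY.14}: a constant term, plus a term $h_2$ homogeneous of degree $2$ in the transverse variables, plus higher order terms. There is no degree-$1$ term, by \cite[Lemma~3.1]{CDR2019}. Here $h_2$ restricts to the flat $A_1$ cone potential on \eqref{ci.1b} or \eqref{ci.2b}. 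Then apply \cite[Lemma~5.3]{CR2024} twice. The first application replaces $h_2$ in a compact region by a potential for AC Calabi-Yau (Eguchi--Hanson type) metrics on the fibres \eqref{ci.6}, \eqref{ci.8}, \eqref{ci.9}. The second fixes the potential on a compact set. The result is a potential $u$ whose form $\omega=\frac{\sqrt{-1}}2\pa\db u$ is a warped $\QAC$ Kähler form on $C_1$, with model $d\rho^2+\rho^2\hphi_i^*g+\rho\, g_{\mathrm{fibre}}$.

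The holomorphic volume form is defined by $dz_1\wedge\cdots\wedge dz_5|_{C_1}=\Omega^3\wedge dP_1\wedge dP_2$. We then show that the Ricci potential satisfies $\mathfrak{r}\in\widehat{x}_{\max}^{\beta}\CI_{\nQb,1}(C_1)$, and after that we solve \eqref{CY.17} via \cite[\S~6]{CR2024}.

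The main obstacle is this Ricci potential estimate near $\hH_1$ and $\hH_2$. It is the analogue of \eqref{CY.17b}, but now for both differentials $d(P_1-Q_1)$ and $d(P_2-Q_2)$ at once. Near $\hH_1$, the first equation of \eqref{ci.5} is not a fibre equation. Instead it forces $\omega_{1,4}=\cO(\widetilde{\xi}_1)$, so its differential is dominated by $d\omega_{1,4}$, which is transverse to the fibres. We must therefore check that $dP_1\wedge dP_2$ factors, up to a relative error $\cO(\widetilde{\xi})$, as a nonvanishing transverse factor times the fibre differential of $\zeta_{1,2}\zeta_{1,3}-\zeta_{1,5}^2-\omega_{1,1}^3$. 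This is measured against the basis $\frac{d\xi}{\xi^2}$, $\frac{d\omega}{\xi}$, $\xi^{-\nu}d\zeta$. Near $\hH_2$ the roles of the two equations are reversed: from \eqref{ci.7}, the second equation forces one of $\omega_{i,2},\omega_{i,3}$ to vanish to order $\widetilde{\xi}^4$.

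I would do this by computing the expansions \eqref{ci.5} and \eqref{ci.7} explicitly. I expect the errors to be of relative size $\widetilde{\xi}$. Combined with the fibrewise Ricci-flatness of the AC models and with the exactness of the Calabi-Yau condition on the cone, this should give decay at rate $\beta=4$. If only a slower rate were obtained, one would instead invoke the other case of Theorem~\ref{CY.14}'s dichotomy with the smaller $\beta$, which still lies above $3$.
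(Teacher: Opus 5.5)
Your outline is the paper's route: its proof only says to rerun the proof of Theorem~\ref{CY.14} with $\beta=4$ and $\nu=\frac12$, and your bookkeeping ($d_1-\ell_1=d_2-\ell_2=2$, $\frac{2}{1-\nu}=4=\beta$) is exactly that. You also picked the right place to worry. However, the step you propose there fails, and the failure comes from the set-up of \S\ref{ci.0} that both arguments rely on.

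\textbf{The transverse singularity along $L_1$ is $A_3$, not $A_1$.} It is not the $A_1$ cone \eqref{ci.1b} but $\bbC^2/\bbZ_4$. The stabiliser of the $x_1$-axis is $\{(1,\sqrt{-1}^{k_2})\}\cong\bbZ_4$. Equivalently, eliminating $\omega_{1,5}=\omega_{1,4}^2/\omega_{1,1}$ from \eqref{ci.1} gives $\omega_{1,2}\omega_{1,3}=\omega_{1,1}^{-2}\omega_{1,4}^4$.

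\textbf{Consequences for your factorization step.} On $C_1$, the first equation of \eqref{ci.5} forces $\omega_{1,4}=\pm\widetilde{\xi}_1\sqrt{\omega_{1,1}\zeta_{1,5}-\widetilde{\xi}_1^4(\zeta_{1,2}+\zeta_{1,3})}$. Write $\omega_{1,4}=\widetilde{\xi}_1\eta$.
\begin{itemize}
\item The coefficient $-2\omega_{1,4}=-2\widetilde{\xi}_1\eta$ of $d\omega_{1,4}$ is not a nonvanishing transverse factor: it vanishes where $\eta=0$.
\item $\eta$ does not even extend continuously to $\widehat{C}_1$. Near a boundary point with $\zeta_{1,5}\neq 0$, $C_1$ consists of two sheets $\pm$ whose closures meet along $\{\widetilde{\xi}_1=\omega_{1,4}=0\}$. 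So $\widehat{C}_1$ is not a manifold with corners with boundary face $\widehat{C}_1\cap\hH_1$, and the fibres \eqref{ci.6} are only $2{:}1$ branched images of the true fibres.
\item $d\omega_{1,4}$ does not dominate. In your basis, $-2\omega_{1,4}\,d\omega_{1,4}$ and $\widetilde{\xi}_1^2\omega_{1,1}\,d\zeta_{1,5}$ both have size $\widetilde{\xi}_1^3$. Indeed, the left-hand side minus the right-hand side of the first equation of \eqref{ci.5} equals $\widetilde{\xi}_1^2\big(\omega_{1,1}\zeta_{1,5}-\eta^2-\widetilde{\xi}_1^4(\zeta_{1,2}+\zeta_{1,3})\big)$. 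This is a fibre equation in the variable $\eta$, not a transverse one.
\item The potential step fails for the same reason. On $C_0$ near $\pa\widetilde{L}_1$ one has $r^2=|z_1|\big(1+|\omega_{1,2}|^{1/2}+|\omega_{1,3}|^{1/2}\big)$. This has a nonzero term of degree $1$ for the weights of $\widetilde{v}_1$, so there is no expansion without $h_1$, and there is no flat $A_1$ potential $h_2$ there.
\end{itemize}

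\textbf{What repairing it gives.} You must rescale $\omega_{1,4}$ as well: blow up $\pa\widetilde{L}_1$ rather than $\pa(\widetilde{L_1+L_4})$, with weight $1$ on $\widetilde{\xi}_1$ and $\omega_{1,4}$ and weight $2$ on $\omega_{1,2},\omega_{1,3},\omega_{1,5}$. With $\eta=\omega_{1,4}/\widetilde{\xi}_1$ the equations become
\begin{itemize}
\item $\omega_{1,1}\zeta_{1,5}-\eta^2=\widetilde{\xi}_1^4(\zeta_{1,2}+\zeta_{1,3})$ and
\item $\zeta_{1,2}\zeta_{1,3}-\zeta_{1,5}^2=\omega_{1,1}^3+\widetilde{\xi}_1^{12}$.
\end{itemize}
These have fibrewise independent differentials. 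The boundary fibres are the $A_3$ Milnor fibres $\zeta_{1,2}\zeta_{1,3}-\omega_{1,1}^{-2}\eta^4=\omega_{1,1}^3$.

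But the transverse cone $\bbC^2/\bbZ_4$ has $d_s=8$ (weights $4,4,2$ once $r^2$ has degree $2$). So the adapted Assumption~\ref{CY.13}, which requires a common $d_s$, fails. The warping exponent at this face is $\nu_1=\frac{8-2}{8}=\frac34$. Concretely, the fibre coordinates are $x_2|x_1|^{-3/4}$ and $x_3|x_1|^{-3/4}$, so the ALE scale along $L_1$ is $\rho^{3/4}$, not $\rho^{1/2}$. The value $\nu=\frac12$ is correct only at $\hH_2$.

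Since $3<\beta=4\le\frac{2}{1-\nu_1}=8<9$, the Monge--Amp\`ere step of \cite{CR2024} plausibly still goes through once it is set up with two warping exponents. It would then produce a metric with a different compactification and warping $\frac34$ near $L_1$, not the statement as written. Neither your sketch nor the paper's one-line proof addresses this.
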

\begin{proof}
We can proceed essentially as in the proof of Theorem~\ref{CY.14} with $\beta=4$ and $\nu=\frac12$.
\end{proof}

\bibliography{QAC_smoothing}
\bibliographystyle{amsplain}

\end{document}